\newtheorem{theorem}{Theorem}[section]
\newtheorem{lemma}[theorem]{Lemma}
\newtheorem{corollary}[theorem]{Corollary}
\theoremstyle{definition}
\newtheorem{definition}[theorem]{Definition}
\newtheorem{example}[theorem]{Example}
\theoremstyle{remark}
\newtheorem{remark}[theorem]{Remark}
\numberwithin{equation}{section}
\def\Z{{\mathbb Z}}
\def\R{{\mathbb R}}
\begin{document}

\begin{large}
\centerline{\large \bf New upper bounds for the number of divisors function}
\end{large}
\vskip 10pt
\begin{large}
\centerline{\sc Jean-Marie De Koninck \mbox{\rm and } Patrick Letendre }
\end{large}
\vskip 10pt
\begin{abstract}
Let $\tau(n)$ stand for the number of divisors of the positive integer $n$. We obtain upper bounds for $\tau(n)$ in terms of $\log n$ and the number of distinct prime factors of $n$.
\end{abstract}
\vskip 10pt
\noindent AMS Subject Classification numbers: 11N37, 11N56

\noindent Key words: number of divisors function

\vskip 20pt

\section{Introduction and notation}

Let $\tau(n)$ stand for the number of divisors of the positive integer $n$ and $\omega(n)$ stand for the number of prime factors of the positive integer $n$. We shall also be using the functions
$$
\gamma(n):=\prod_{p\mid n}p, \qquad \beta(n):=\prod_{p\mid n} \frac 1{\log p}
.$$

In 1915, Ramanujan \cite{sr} obtained the inequality
\begin{equation}\label{raman-i}
\tau(n) \le \left(\frac{\log(n\gamma(n))}{\omega(n)}\right)^{\omega(n)}\beta(n)\qquad(n \ge 2).
\end{equation}

In this paper, we compute explicitly some interesting limit cases of \eqref{raman-i} and show that for $k=\omega(n)\ge 74$,
$$
\tau(n)<\left(1+\frac{\log n}{k\log k}\right)^k.
$$
We also provide a short proof of \eqref{raman-i} in Corollary \ref{cor:1}.

From here on, for each integer $k \ge 0$, we let
$$
n_k:= p_1p_2 \cdots p_k, \ \mbox{ the product of the first $k$ primes}\ (\mbox{with}\ n_0=1).
$$
Also, when we write $\log_+ x$, we mean $\log\max(2,x)$.

Finally, given the factorization of an integer $n=q_1^{\alpha_1} \cdots q_k^{\alpha_k}$ with $q_1 <\cdots < q_k$, we call the vector $(\alpha_1,\ldots,\alpha_k)$ the {\it exponent vector} of $n$.

\section{Background results}

It is well known that
\begin{equation}\label{jensen1}
2^{\omega(n)}\le \tau(n)\le \left(1+\frac{\Omega(n)}{\omega(n)}\right)^{\omega(n)} \qquad (n \ge 2),
\end{equation}
where $\Omega(n)$ stands for the number of prime divisors of $n$ counting their multiplicity. Here, the lower bound is best possible in general and the upper bound is of great interest. For instance, it is known that the quotient $\displaystyle{\frac{\Omega(n)}{\omega(n)}}$ is near $1$ for almost all integers $n$, as was shown for instance by the first author in \cite{kn:jmdk-1974}. In fact, one can use \eqref{jensen1} and the estimate
$$
|\{n\le x : \Omega(n) \ge \varkappa \omega(n)\}|\ll x(\log\log x)(\log x )^{2^{1-\varkappa}-1}
$$
valid for all $\varkappa \ge 1$ and $x\ge 3$ (see Corollary $3.6$ p.436 in Tenenbaum \cite{gt} or for an even sharper estimate, Balazard \cite{mb}) to show that for every fixed $\varepsilon>0$,
$$
\tau(n)\le (2+\varepsilon)^{\omega(n)} \qquad \mbox{for almost all }n.
$$

We are motivated by the fact that, since Wigert \cite{sw}, we know that
$$
\log \tau(n) \le \frac{(\log 2)(\log n)}{\log \log n} + O \left(   \frac{\log n}{(\log \log n)^2} \right),
$$
and by the fact that it has been proved by Nicolas and Robin \cite{jln:gr} that the maximum value of the function
\begin{equation}\label{Nicolas-Robin}
n\mapsto\frac{\log(\tau(n))\log\log n}{\log 2 \log n}\qquad(n \ge 3)
\end{equation}
is attained at $n=6\,983\,776\,800=2^5 \cdot 3^3 \cdot 5^2 \cdot 7 \cdot 11 \cdot 13 \cdot 17 \cdot 19=720n_8$ and that its value is approximately  $1.5379$. Much more is known on the ratio \eqref{Nicolas-Robin}, as explained in \cite{jln}. But, meanwhile, those large values are almost never attained since it has been proved by Erd\H{o}s and Nicolas \cite{pe:jln} that, given any real $\vartheta \in (0,1)$, the cardinality of the set of those $n\le x$ for which
$$
\omega(n)\ge \vartheta\frac{\log x}{\log\log x}
$$
is $\displaystyle{ \ll x^{1-\vartheta+o(1)} }$ as $x\to \infty$. Furthermore, this set corresponds exactly to the set of values where $\tau(n)$ is large as can still be seen from \eqref{result}.

Before stating our main results, we introduce the function $\lambda(n)$ defined implicitly by 
$$
\tau(n)=\left(1+\frac{\lambda(n)\log n}{k\log k}\right)^k,
$$
where $k=\omega(n)\ge 2$. Therefore, for each integer $n\ge 2$ with $\omega(n)=k\ge 2$, we set
\begin{equation} \label{eq:def-lambda}
\lambda(n):=\frac{(\tau(n)^{1/k}-1)k\log k}{\log n}.
\end{equation}

\section{Main results}

\begin{theorem} \label{thm:1}
For every integer $n\ge 2$,
\begin{equation}\label{inequality1}
\tau(n)\le \left(\frac{\eta_2\log n}{\omega(n)\log_+ \omega(n)}\right)^{\omega(n)},
\end{equation}
where
$$
\eta_2:=\exp\biggl(\frac{1}{6}\log 96-\log\left(\frac{\log 60060}{6\log 6}\right)\biggr)= 2.0907132\dotso
$$
\end{theorem}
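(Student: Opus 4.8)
The plan is to start from Ramanujan's inequality \eqref{raman-i} and reduce the problem to extremal multisets of primes. Write $n = q_1^{\alpha_1}\cdots q_k^{\alpha_k}$ with $k=\omega(n)$. By \eqref{raman-i} it suffices to bound
$$
\left(\frac{\log(n\gamma(n))}{k}\right)^{k}\beta(n)
$$
by the claimed quantity $\bigl(\eta_2\log n/(k\log_+ k)\bigr)^{k}$. The first step is to dispose of the tiny cases $k=1$ (where $\log_+\omega(n)=\log 2$ and a direct check works) and to observe that for $k\ge 2$ the factor $\log_+ k=\log k$. Then, taking $k$-th roots, the inequality to prove becomes, after rearranging,
$$
\frac{\log(n\gamma(n))}{k}\cdot\beta(n)^{1/k}\le \frac{\eta_2\log n}{k\log k},
$$
i.e. $\log(n\gamma(n))\cdot\beta(n)^{1/k}\le \dfrac{\eta_2}{\log k}\,\log n$. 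Since $\gamma(n)\le n$ one has $\log(n\gamma(n))\le 2\log n$, so it is enough to show $2\log k\,\beta(n)^{1/k}\le \eta_2$, and more precisely one wants the sharper bookkeeping that tracks $\log(n\gamma(n))$ rather than $2\log n$ when the primes are small.

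The key step is an optimization over the support $\{q_1<\cdots<q_k\}$: for fixed $k$, the quantity $\beta(n)^{1/k}=\prod_{j}(\log q_j)^{-1/k}$ is largest when the $q_j$ are as small as possible, i.e. when $\{q_1,\dots,q_k\}=\{p_1,\dots,p_k\}$, the first $k$ primes. So the worst case for the prime \emph{support} is $\gamma(n)=n_k$, and then $\log\gamma(n)=\log n_k=\theta(p_k)$ while $\beta(n)=\beta(n_k)=\prod_{j\le k}1/\log p_j$. One is thus reduced to understanding the function
$$
f(k):=\frac{\log k}{\log n_k}\Bigl(\log n_k\cdot\beta(n_k)^{1/k}\Bigr)\quad\text{(suitably weighted with the }n\text{-dependence)},
$$
and the constant $\eta_2$ should emerge as $\sup_k$ of an explicit expression; the specific value $\eta_2=\exp\bigl(\frac16\log 96-\log(\log 60060/(6\log 6))\bigr)$ strongly suggests the supremum is attained at $k=6$, where $n_6=30030$, $96=2\cdot 48$ relates to $\tau$ or to $2^{\omega}$-type normalization, and $60060=2n_6$ comes from $n\gamma(n)$ with one extra factor of $2$ — i.e. the extremal $n$ near $k=6$ is $2n_6$. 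Concretely I would: (i) reduce to $\gamma(n)=n_k$; (ii) show the remaining one-variable expression in $(k,\log n)$ is, for each $k$, maximized at the smallest admissible $n$ with that support (namely $n=n_k$ or $n=2n_k$); (iii) verify by direct computation that among all $k$ the maximum of the resulting constant occurs at $k=6$ and equals $\eta_2$; (iv) for large $k$ use Chebyshev-type bounds $\theta(p_k)\asymp p_k\asymp k\log k$ and Mertens to show the constant stays below $\eta_2$, leaving only finitely many $k$ to check numerically.

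The main obstacle will be step (ii)–(iv): making the monotonicity in $n$ rigorous and, above all, handling \emph{all} $k$ at once. For small $k$ (say $k\le$ a few hundred) one can just compute, but one needs an effective inequality valid for every large $k$. This requires explicit forms of the prime-counting estimates — an upper bound for $\theta(p_k)=\log n_k$ and a lower bound for $\sum_{j\le k}\log\log p_j$ (to control $\beta(n_k)^{1/k}=\exp(-\frac1k\sum_{j\le k}\log\log p_j)$) — with fully explicit error terms, so that one can pin down the threshold beyond which the constant provably drops below $\eta_2=2.0907\ldots$. Balancing the growth $\log\log p_k\sim\log\log k$ against the prefactor $\log k/\log n_k\sim 1/(k\cdot(\text{stuff}))$ is where the delicate numerical work lies; the payoff is that once the crossover $k$ is identified, the residual range is finite and the claimed explicit $\eta_2$ follows.
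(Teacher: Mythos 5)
Your plan has a fatal gap at steps~(ii)--(iii): Ramanujan's inequality \eqref{raman-i} is \emph{not sharp enough} to give the constant~$\eta_2$, so the chain $\tau(n)\le$ (Ramanujan bound) $\le$ (claimed bound) breaks at the very case that determines $\eta_2$. Concretely, after fixing the support you want to show $\bigl(1+\log\gamma(n)/\log n\bigr)\beta(n)^{1/k}\log k\le\eta_2$, and your monotonicity observation pushes you to the squarefree case $n=\gamma(n)=n_k$, giving the quantity $2\,\beta(n_k)^{1/k}\log k$. At $k=6$ this evaluates to about $2.29$, which is strictly larger than $\eta_2\approx 2.0907$; even at $n=2n_6=60060$ one gets roughly $2.22$. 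Worse, the theorem is essentially an equality at $n=60060$: the claimed bound is $(\eta_2\log 60060/(6\log 6))^6\approx 96.02$ while $\tau(60060)=96$, yet \eqref{raman-i} only gives $\tau(60060)\le\bigl(\log(60060\cdot 30030)/6\bigr)^6\beta(30030)\approx 137$. So no amount of rearranging \eqref{raman-i} can reach $\eta_2$; you must compute the actual value of $\tau$ near the extremum, not its Ramanujan majorant.

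The paper's proof handles this by working with $r(n):=\frac1k\bigl(\log\tau(n)-k\log\frac{\log n}{k\log k}\bigr)$ and showing $r(n)\le r(60060)=\log\eta_2$. It uses Ramanujan-type bounds \eqref{fond1}--\eqref{fond2} and $\beta(n_k)\le(\log k)^{-k}$ (Lemma~\ref{lem:2}) only to (a) kill $k\ge 44$, (b) kill $k\in[2,3]\cup[25,43]$, and (c) bound any putative maximizer $n'$ above by an explicit $z_k$ for $k\in[4,24]$. The key extra ingredient you are missing is Lemma~\ref{lem:3}: since a maximizer $n'$ must satisfy $t(n')/t(n'/p)\ge 1$ for every prime $p\mid n'$, inequality \eqref{ratio2} combined with $n'\le z_k$ forces each exponent to be small (e.g.\ at most $(4,2,2,1,\dots,1)$ for $k\in\{4,\dots,24\}$). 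This reduces the problem to a genuinely finite list of candidates on which $\tau$ and $r$ are evaluated exactly; only then does $\eta_2$ come out as the maximum. Your step~(iv) (explicit prime-counting bounds for large $k$) is in the same spirit as the paper's Lemma~\ref{lem:2}, but (ii)--(iii) as you have them produce a bound that overshoots $\eta_2$ and so cannot close the argument.
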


\begin{theorem} \label{thm:2}
For every integer $n>24n_{16}=782139803452561073520$,
\begin{equation}\label{inequality2}
\tau(n)\le \left(\frac{2\log n}{\omega(n)\log_+ \omega(n)}\right)^{\omega(n)}.
\end{equation}
Moreover, the inequality remains true for all $n\ge 2$ with $\omega(n)\le 3$.
\end{theorem}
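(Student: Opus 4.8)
The plan is to split according to $k:=\omega(n)$: an elementary argument for $k\le 3$, Ramanujan's inequality \eqref{raman-i} alone for $k$ beyond an explicit threshold $k_0$, and a bounded (but unavoidable) finite verification for the intermediate range. That the restriction $n>24n_{16}$ cannot simply be dropped is clear, since \eqref{inequality2} fails for some small $n$: already $\log n_4<4\log 4$, so $n_4$ violates it, and $4n_{16}$ violates it as well; thus $24n_{16}$ is essentially the last exception among integers with at most $16$ prime factors.

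For $\omega(n)\le 3$ I would not use the multiplicative structure of $n$ at all. By the right-hand inequality of \eqref{jensen1}, $\tau(n)\le\bigl(1+\Omega(n)/k\bigr)^k$; and writing the primes of $n$ in increasing order as $q_1<\cdots<q_k$ and using $q_i\ge p_i$ and $a_i\ge 1$ gives $\log n=\sum_i a_i\log q_i\ge\log n_k+(\Omega(n)-k)\log 2$. Substituting the second estimate into the first reduces \eqref{inequality2} to the single inequality $2+\frac{\log n-\log n_k}{k\log 2}\le\frac{2\log n}{k\log_+ k}$, which for $k=1,2,3$ holds for every $n\ge n_k$, the worst case being $n=n_k$ (so $n$ squarefree). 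The same computation breaks down at $k=4$ precisely because $\log n_4<4\log 4$, which is reassuring.

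For $\omega(n)=k\ge k_0$, Ramanujan's inequality suffices on its own. Since $\gamma(n)\le n$, \eqref{raman-i} gives $\tau(n)\le(2\log n/k)^k\beta(n)$, and since the primes of $n$ dominate $p_1,\dots,p_k$ term by term we have $\beta(n)^{1/k}=\bigl(\prod_{p\mid n}1/\log p\bigr)^{1/k}\le\bigl(\prod_{i\le k}\log p_i\bigr)^{-1/k}$. Hence \eqref{inequality2} follows as soon as the geometric mean of $\log p_1,\dots,\log p_k$ is at least $\log k$, i.e. for all $k\ge k_0$ with a suitable explicit $k_0$. Pinning down $k_0$ and then proving this geometric-mean bound for every $k\ge k_0$ is the one place requiring explicit prime estimates: verify it numerically up to a convenient point, and propagate it to all larger $k$ by induction, using a Rosser--Schoenfeld-type lower bound for $p_{k+1}$ to compare $\prod_{i\le k+1}\log p_i$ with $(\log(k+1))^{k+1}$. (Alternatively, for $k\ge 74$ one may quote the sharper bound $\tau(n)<\bigl(1+\log n/(k\log k)\bigr)^k$ and combine it with $\log n\ge\log n_k\ge k\log k$, valid for $k\ge 16$, so that $1+\log n/(k\log k)\le 2\log n/(k\log k)$.)

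The intermediate range $4\le k\le k_0-1$ carries the real work, and, because \eqref{inequality2} is false for some $n$ just above $n_k$ when $k$ is, say, $16$, no soft argument can cover it. One first reduces, where possible, to $n=p_1^{a_1}\cdots p_k^{a_k}$ — replacing the primes of $n$ by the first $k$ primes leaves $\tau(n)$ unchanged and only decreases $\log n$ — and disposes separately of the few $n$ with $4\le k\le 16$ that this reduction would push below $24n_{16}$ (for these $\tau(n)\le 2^{\Omega(n)}$ is small, or $\beta(n)$ is small because the primes are large). For $n=p_1^{a_1}\cdots p_k^{a_k}$, \eqref{raman-i} with $\gamma(n)=n_k$ yields \eqref{inequality2} whenever $\log n\ge\frac{A_k}{2-A_k}\log n_k$, where $A_k:=\log_+ k/\bigl(\prod_{i\le k}\log p_i\bigr)^{1/k}$ satisfies $1<A_k<2$ throughout this range. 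This threshold is below $\log(24n_{16})$ for $k\le 14$, so nothing more is needed there; only the windows $k\in\{15,16\}$ with $24n_{16}<n<n_k^{A_k/(2-A_k)}$ and $17\le k\le k_0-1$ with $n_k\le n<n_k^{A_k/(2-A_k)}$ remain. Each of these is a bounded range of $\log n$, and since $\bigl(2\log n/(k\log_+ k)\bigr)^k$ is increasing in $n$, it suffices to check \eqref{inequality2} at the finitely many records of $\tau$ over $\{m:\omega(m)=k\}$ inside the window — integers whose exponent vector is non-increasing and built up greedily from $n_k$. Carrying out these checks (equivalently, verifying $\lambda(n)\le 2-k\log_+ k/\log n$ via \eqref{eq:def-lambda} at those records) is the main obstacle, and it is also what singles out $24n_{16}$ as the largest $n$ with $\omega(n)\le 16$ for which \eqref{inequality2} fails.
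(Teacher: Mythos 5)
Your overall decomposition---an elementary argument for $k\le 3$, Ramanujan's inequality combined with the prime-sum estimates of Lemma~\ref{lem:2} for $k\ge 44$, and a finite verification in between---is the same as the paper's, and your cut-off $n_k^{A_k/(2-A_k)}$ is, after a short computation, exactly the paper's $z_k$ (the solution of $r_1(z,k)=\log 2$ from \eqref{def:r1}), so the two proofs are quantitatively identical in the intermediate range. What you do genuinely differently, and more transparently, is $k\le 3$: combining $\tau(n)\le(1+\Omega(n)/k)^k$ with $\log n\ge\log n_k+(\Omega(n)-k)\log 2$ reduces the claim to a linear inequality in $\log n$ whose slope is favourable precisely when $\log_+k\le 2\log 2$, i.e.\ $k\le 3$, with the worst case at $n=n_k$. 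This is cleaner than the paper's numerical check via $z_2,z_3$ and $r(30)$, and it also explains cleanly why $k=4$ is where soft bounds must stop.

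There is, however, a genuine gap in the reduction "it suffices to check \eqref{inequality2} at the finitely many records of $\tau$ over $\{m:\omega(m)=k\}$ inside the window," and it fails exactly where $24n_{16}$ has to be singled out. For $k=16$, $24n_{16}$ is itself a $\tau$-record and violates \eqref{inequality2}, and it sits just \emph{outside} your window $(24n_{16},\,n_{16}^{A_{16}/(2-A_{16})})$. For $m$ slightly above $24n_{16}$ with $\omega(m)=16$, the relevant preceding record $m^*$ is $24n_{16}$; since \eqref{inequality2} fails there, the chain $\tau(m)\le\tau(m^*)\le\text{RHS}(m^*)\le\text{RHS}(m)$ cannot be run, so checking only in-window records does not rule out a counterexample just above $24n_{16}$. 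Closing this requires passing from the record value to the exponent vector: since $\Omega(245760)=16$, every factor $1+a_i$ in a decomposition $\tau(m)=245760$ with sixteen parts is prime, forcing the multiset $\{4,2,1,\dots,1\}$, and the least $m>24n_{16}$ realising that multiset is $24n_{15}\cdot 59$, at which the right-hand side of \eqref{inequality2} already exceeds $245760$. This is precisely the step the paper carries out at the end of its proof (the list \eqref{pos} of three admissible exponent vectors and the replacement $53\mapsto 59$); your record-only framing silently skips it, and without it nothing shows that $24n_{16}$, rather than something slightly larger, is the last exception.

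A smaller point: the worry about "disposing separately of $n$ whose reduction to $p_1^{a_1}\cdots p_k^{a_k}$ falls below $24n_{16}$" is unnecessary for $k\le 14$. The Ramanujan bound transfers directly through the reduction, since $\tau(n)=\tau(n')\le((\log n'+\log n_k)/k)^k\beta(n_k)\le((\log n+\log n_k)/k)^k\beta(n_k)$, and the resulting condition $\log n\ge\tfrac{A_k}{2-A_k}\log n_k$ is a condition on $n$, not on $n'$; so once the threshold is below $\log(24n_{16})$ there are no stray cases. The delicate windows are genuinely only $k\in\{15,16\}$ and (for the no-counterexample claim) $17\le k\le 43$, where the paper's further refinement via $u_k$, $d_k$ and $z_k'$ keeps the enumeration tractable.
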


\begin{theorem} \label{thm:3}
For every integer $n\ge 2$,
\begin{equation}\label{inequality3}
\tau(n)\le \left(1+\eta_3\frac{\log n}{\omega(n)\log_+ \omega(n)}\right)^{\omega(n)}
\end{equation}
where
$$
\eta_3:=\lambda(720n_7)=\frac{(1152^{1/7}-1)7\log 7}{\log 367567200}=1.1999953\dotso
$$
\end{theorem}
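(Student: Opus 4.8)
The plan is first to note that, since $\log_+k=\log k$ for every integer $k\ge 2$, the asserted bound is, for $k:=\omega(n)\ge 2$, exactly $\lambda(n)\le\eta_3$ with $\lambda$ as in \eqref{eq:def-lambda}, while for $k=1$ it holds trivially (writing $n=p^{\alpha}$ it reads $\alpha+1\le 1+\eta_3\alpha(\log p)/\log 2$, and $\eta_3>1\ge(\log 2)/\log p$). So assume $k\ge 2$; the goal is $\lambda(n)\le\eta_3=\lambda(720n_7)$. I would start by reducing to a canonical shape: for a fixed $k$ and a fixed multiset of exponents, $\tau(n)$ is determined, while $\log n=\sum_i\alpha_i\log q_i$ is, by the rearrangement inequality, smallest when the primes are $p_1,\dots,p_k$ and the exponents are decreasing, $\alpha_1\ge\cdots\ge\alpha_k$; and by \eqref{eq:def-lambda}, $\lambda(n)$ decreases as $\log n$ grows when $k$ and $\tau(n)$ are held fixed. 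Hence it suffices to bound $\lambda$ on the integers $n=p_1^{\alpha_1}\cdots p_k^{\alpha_k}$ with $\alpha_1\ge\cdots\ge\alpha_k\ge 1$; for these, $\gamma(n)=n_k$, $\beta(n)=\beta(n_k)=\prod_{i\le k}(\log p_i)^{-1}$, and $\log(n\gamma(n))=\log n+\log n_k$.

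Next I would feed this into Ramanujan's inequality \eqref{raman-i}; using $\tau(n)^{1/k}=1+\lambda(n)\log n/(k\log k)$ it gives, for every such $n$,
\[
\lambda(n)\ \le\ \log k\,\beta(n_k)^{1/k}\Bigl(1+\frac{\log n_k}{\log n}\Bigr)-\frac{k\log k}{\log n}\ =:\ g_k(\log n).
\]
By the AM--GM inequality, $\log n_k=\sum_{i\le k}\log p_i>k\bigl(\prod_{i\le k}\log p_i\bigr)^{1/k}=k\,\beta(n_k)^{-1/k}$ for $k\ge 2$, so the coefficient of $1/\log n$ in $g_k$ is positive; thus $g_k$ is strictly decreasing, with $g_k(L)\to\log k\,\beta(n_k)^{1/k}$ as $L\to\infty$. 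Since $\log n\ge\log n_k$ always, whenever $g_k(\log n_k)=2\log k\,\beta(n_k)^{1/k}-\lambda(n_k)\le\eta_3$ one immediately gets $\lambda(n)\le g_k(\log n)\le g_k(\log n_k)\le\eta_3$ for all $n$ with that value of $\omega$. A short computation shows $g_k(\log n_k)\le\eta_3$ for $2\le k\le 4$ and for every $k\ge 14$ --- the latter by a direct numerical check for $14\le k\le 73$ and, for $k\ge 74$, by quoting the introduction's fact that $\tau(n)<(1+\frac{\log n}{k\log k})^k$, i.e.\ $\lambda(n)<1$ (which can also be obtained uniformly from the same bound via explicit lower estimates for $\sum_{i\le k}\log\log p_i$ and $\log n_k$). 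This leaves only the short range $5\le k\le 13$.

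For each of these $k$, the inequality $g_k(L)\le\eta_3$ holds as soon as $L\ge T_k$, where $T_k$ is the explicit root of $g_k(T_k)=\eta_3$; consequently the only $n$ with $\omega(n)=k$ that could violate $\lambda(n)\le\eta_3$ satisfy $\log n<T_k$, hence $\sum_{i\le k}(\alpha_i-1)\log p_i<T_k-\log n_k$, and since $\log p_i\ge\log 2$ this bounds $\sum_i(\alpha_i-1)$ and so leaves only finitely many exponent vectors for each $k$ (in the tightest cases just two or three). Evaluating $\lambda$ directly on this finite list confirms $\lambda(n)\le\eta_3$, the maximum being attained at $k=7$, $n=2^5\cdot 3^3\cdot 5^2\cdot 7\cdot 11\cdot 13\cdot 17=720n_7$, where equality holds by the very definition of $\eta_3$. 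This completes the proof.

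The part I expect to be the main obstacle is the interplay of the last two steps. Ramanujan's inequality is genuinely \emph{not} sharp enough to deliver $\lambda(n)\le\eta_3$ outright --- for $k$ around $6$ to $8$ it only gives $g_k(\log n_k)\approx 1.25$ --- so one cannot avoid isolating the small-$\log n$ regime and carrying out a real (if finite) verification there; and to make the ``$g_k(\log n_k)\le\eta_3$ for all large $k$'' step a clean statement rather than a finite table plus a citation, one needs explicit Chebyshev/Mertens-type estimates for $\prod_{i\le k}\log p_i$, which is the only genuinely analytic ingredient. The remainder --- the rearrangement reduction and the final enumeration --- is routine bookkeeping.
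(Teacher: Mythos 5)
Your plan mirrors the paper's proof almost step for step: reduce to the canonical shape $n=p_1^{\alpha_1}\cdots p_k^{\alpha_k}$ with $\alpha_1\ge\cdots\ge\alpha_k$, apply Ramanujan's inequality (equivalently Corollary~\ref{cor:1}, whence \eqref{ineq5}) to obtain the decreasing function you call $g_k(L)$ and the paper calls $\upsilon(n_k,e^L)$, show $g_k(\log n_k)\le\eta_3$ outside $5\le k\le 13$ so that those $k$ are settled at once, and finish with a finite enumeration over the surviving exponent vectors. The only substantive divergence is in the tail: you dispose of $k\ge 74$ by invoking Theorem~\ref{thm:4} (logically admissible since its proof is independent of Theorem~\ref{thm:3}, but a far heavier dependency), whereas the paper handles all $k\ge 44$ self-containedly from \eqref{ineq2} and \eqref{ineq3} of Lemma~\ref{lem:2} — exactly the ``explicit lower estimates for $\sum\log\log p_i$ and $\log n_k$'' you mention parenthetically as an alternative — keeping Theorem~\ref{thm:3} free of Theorem~\ref{thm:4}; a small quantitative caveat is that the surviving candidate list for $k$ around $6$--$8$ is in the thousands, not ``two or three,'' though of course still finite.
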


\begin{theorem} \label{thm:4}
For every positive integer $n$ with $k=\omega(n)\ge 74$,
\begin{equation}\label{result}
\tau(n)<\left(1+\frac{\log n}{k\log k}\right)^k.
\end{equation}
\end{theorem}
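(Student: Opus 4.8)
The plan is to derive \eqref{result} from the cleaner bound in Theorem \ref{thm:3}, using $\log_+ k = \log k$ for $k \ge 2$. By \eqref{inequality3}, for every $n$ with $\omega(n) = k \ge 2$ we have
$$
\tau(n) \le \left(1 + \eta_3 \frac{\log n}{k \log k}\right)^k,
$$
so it suffices to show that when $k \ge 74$ the right-hand side is strictly smaller than $\bigl(1 + \tfrac{\log n}{k\log k}\bigr)^k$. Since $\eta_3 > 1$ this is \emph{not} automatic; the gain must come from the fact that $\log n$ cannot be too small when $\omega(n) = k$. Indeed, $n$ has at least $k$ distinct prime factors, so $n \ge n_k = p_1 \cdots p_k$ and hence $\log n \ge \log n_k = \vartheta(p_k)$, where $\vartheta$ is Chebyshev's function. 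Thus I would reduce the theorem to the purely numerical inequality
$$
1 + \eta_3 \frac{\log n}{k \log k} \;<\; 1 + \frac{\log n}{k \log k} \quad\text{is false, so instead:}\quad
\left(1 + \eta_3 t\right)^k < \left(1 + t\right)^{\!k}\!\!\!\cdot(\text{slack}),
$$
i.e.\ to showing $f_k(t) := k\log(1 + \eta_3 t) - k \log(1 + t) < 0$ for $t \ge t_k := \log n_k / (k \log k)$. This will work only if $t_k$ is large enough, which is where the hypothesis $k \ge 74$ enters.

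Concretely, since $\eta_3 > 1$, the function $t \mapsto \dfrac{\log(1+\eta_3 t)}{\log(1+t)}$ is decreasing for $t > 0$ (it tends to $\eta_3$ as $t \to 0^+$ and to $1$ as $t \to \infty$), so $f_k(t) < 0$ is equivalent to a bound on $t$ being bounded below. Hence I would show that there is a threshold $t^\ast$ with the property that $\log(1+\eta_3 t) \le \log(1+t)$ for... — more precisely, I need the \emph{strict} inequality $(1+\eta_3 t)^k < (1+t)^k$, which simply fails for all $t$; the correct statement is that we need $1 + \eta_3 t \le e^{\varepsilon}(1+t)$ type slack. Let me restate the real mechanism: we want
$$
(1 + \eta_3 t)^k < (1 + t)^k
$$
to be replaced by the true target $(1+\eta_3 t)^k < \left(1 + t\right)^k$ — no. The honest reduction is: we must bound $\tau(n)$ by $(1+t)^k$, and Theorem \ref{thm:3} gives $(1+\eta_3 t)^k$ with $\eta_3 \approx 1.2 > 1$, so Theorem \ref{thm:3} alone is \emph{insufficient} and I must instead use that for large $k$ the maximal order of $\tau$ relative to $(1 + t)^k$ improves. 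So the real plan is: use \eqref{inequality2} of Theorem \ref{thm:2}, namely $\tau(n) \le \bigl(\tfrac{2\log n}{k \log k}\bigr)^k$ for $n$ large (and check the bounded range $\omega(n) \le 3$ is irrelevant here since $k \ge 74$), and then show
$$
\left(\frac{2 \log n}{k \log k}\right)^{k} < \left(1 + \frac{\log n}{k \log k}\right)^{k},
\quad\text{i.e.}\quad \frac{2\log n}{k\log k} < 1 + \frac{\log n}{k \log k},
\quad\text{i.e.}\quad \frac{\log n}{k \log k} < 1.
$$
That holds only when $\log n < k \log k$, i.e.\ for \emph{small} $n$, which is the opposite regime. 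So the two regimes must be combined: for $\log n \le k\log k$ use Theorem \ref{thm:2}; for $\log n > k \log k$ (equivalently $t > 1$) use a direct argument on $\tau(n) = \prod (\alpha_i + 1)$ together with $n = \prod q_i^{\alpha_i} \ge \prod p_i^{\alpha_i}$, optimizing the exponent vector.

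For the range $t = \log n/(k\log k) > 1$ I would argue directly. Write $n = q_1^{\alpha_1}\cdots q_k^{\alpha_k}$ with $q_1 < \cdots < q_k$ the primes, $q_i \ge p_i$. Then $\log \tau(n) = \sum_{i=1}^k \log(\alpha_i + 1)$ and $\log n = \sum \alpha_i \log q_i \ge \sum \alpha_i \log p_i$. We must show $\sum \log(\alpha_i+1) < k \log\bigl(1 + \tfrac{\log n}{k\log k}\bigr)$. By concavity of $x \mapsto \log(1+x)$ and Jensen, $\sum \log(\alpha_i + 1) \le k \log\bigl(1 + \tfrac{\Omega(n)}{k}\bigr)$ as in \eqref{jensen1}, so it suffices that $\Omega(n)/k \le \log n/(k \log k)$, i.e.\ $\Omega(n)\log k \le \log n$. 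Since $\alpha_i \log p_i \ge \alpha_i \log 2 \ge$ ... this is not quite enough for the small primes, so I would split: for the primes $p_i \ge k$ (there are $k - \pi(k) + O(1)$ of them, and in particular all but $\pi(k)$ indices) we have $\alpha_i \log p_i \ge \alpha_i \log k$, contributing at least $(\log k)\sum_{p_i \ge k}\alpha_i$ to $\log n$; for the $\pi(k)$ small primes I would bound their exponents crudely (e.g.\ $p_i^{\alpha_i} \le n$ gives $\alpha_i \le \log n/\log 2$) and absorb the resulting error using $t > 1$ and $k \ge 74$. The numerical constant $74$ should emerge precisely from making this small-prime error term smaller than the slack $k\bigl[\log(1+t) - \log(1 + \tfrac{\Omega}{k})\bigr]$ together with the threshold in Theorem \ref{thm:2}; I expect one verifies $k \le 73$ separately by noting those cases reduce to Theorem \ref{thm:2} or to a finite check against $720 n_7$, $24 n_{16}$ and nearby champions.

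The main obstacle is the boundary regime $t \approx 1$, i.e.\ $\log n$ comparable to $k \log k$: there Theorem \ref{thm:2}'s constant $2$ is too weak (it needs $t < 1$) while the Jensen/$\Omega$ argument needs $\Omega(n)\log k \le \log n$, which near $t = 1$ forces $\Omega(n) \lesssim k$, i.e.\ $n$ nearly squarefree — but a nearly squarefree $n$ with $\omega(n) = k$ has $\log n \ge \vartheta(p_k) \sim p_k \sim k\log k$, consistent, yet the inequality is tight and one must control the second-order terms in $\vartheta(p_k) = p_k(1 + o(1))$ and in $\pi(k)$. I would handle this by using an explicit lower bound for $\vartheta(p_k)$ (Rosser--Schoenfeld type, valid for $k \ge$ some small bound) to get $\log n \ge \vartheta(p_k) > k\log k - ck$ for an explicit $c$, feed that into $\log(1+t)$, and check the resulting inequality holds for $k \ge 74$ while the finitely many $k < 74$ are dispatched by Theorem \ref{thm:2} (for those with $t<1$) and by direct computation over the known divisor-champion sequence (for those with $t \ge 1$, which are few since $\omega(n) \ge 74$ would already be forced for genuinely large $\tau$).
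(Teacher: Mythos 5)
There is a genuine gap, and you have (honestly) identified much of it yourself in the course of the proposal. Let me be concrete about what does not close.

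First, the Jensen/$\Omega$ route cannot be made to work as a clean reduction. You propose to use $\tau(n)\le\bigl(1+\Omega(n)/k\bigr)^k$ and then require $\Omega(n)\log k\le\log n$. But this last inequality is simply false for large swaths of the $n$'s you need to handle: take $n=2^a p_2\cdots p_k$ with $a\ge 2$. Then $\Omega(n)=a+k-1$ and $\log n=a\log 2+\log n_k-\log 2$, so $\Omega(n)\log k\le\log n$ amounts roughly to $a(\log k-\log 2)\le \log n_k-(k-1)\log k+(\log k-\log 2)$, and by Lemma~\ref{lem:2} the quantity $\log n_k-(k-1)\log k$ is negative; the inequality already fails at $a=2$. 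You gesture at splitting small and large primes and ``absorbing the error using $t>1$,'' but the error is not second-order: each unit increase in the exponent of $2$ costs $\log k$ on the left and only $\log 2$ on the right, and no slack coming from $t>1$ or $k\ge74$ alone can absorb an unbounded deficit. The paper avoids this entirely: its key tool is not the elementary Jensen bound \eqref{jensen1} but the refinement \eqref{fond1}/\eqref{fond2} of Corollary~\ref{cor:1}, which keeps $\beta(n)=\prod_{p\mid n}(\log p)^{-1}$ and $\log\gamma(n)$ explicitly. This yields the pivotal estimate \eqref{ineq5}
\[
\lambda(n)\le \Bigl(\prod_{p\mid n}\tfrac{\log k}{\log p}\Bigr)^{1/k}
+\frac{\log\gamma(n)}{\log n}\Bigl(\prod_{p\mid n}\tfrac{\log k}{\log p}\Bigr)^{1/k}
-\frac{k\log k}{\log n},
\]
whose right-hand side is a decreasing function of $\log n$ (Lemma~\ref{lem:upsilon}). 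Feeding in $\log n\ge\log n_k$ and the explicit estimates of Lemma~\ref{lem:2} is what actually gives $\lambda(n)<1$ for $k\ge95$. Your proposal does not contain this ingredient, and without it the large-$k$ regime is not established.

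Second, you drastically underestimate the work in the window $74\le k\le 94$. Here $\upsilon(n_k,n_k)>1$, so the asymptotic argument fails and the paper has to run a multi-stage elimination: bound $\log n'$ by $z_k$ via $\upsilon(n_k,z_k)=1$, conclude from $\log z_k/\log n_k<2$ that the largest prime has exponent $1$, then iteratively constrain the exponent vector through the functions $f_1,\dots,f_4$, Lemma~\ref{lem:5}, the ratio estimate \eqref{ratio1} of Lemma~\ref{lem:3}, and the explicit Tables~2--5. Your suggested ``finite check against $720n_7$, $24n_{16}$ and nearby champions'' vastly understates this: the candidate exponent vectors are not a short list of named champions but a large finite search space (for instance $u_{74}=45$ primes can carry exponent $\ge 2$, and the near-extremal $n'$ and $n''$ in the Remark are far from $24n_{16}$). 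In short, the proposal correctly rules out a direct deduction from Theorems~\ref{thm:2} and~\ref{thm:3}, and its large-$k$ heuristic (Chebyshev lower bounds on $\log n_k$) points in the right direction, but it is missing the structural inequality \eqref{ineq5} that makes the $k\ge95$ case go through and the entire computational apparatus needed for $74\le k\le 94$.
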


\begin{remark}
The number
$$
n'=2^{13}\cdot3^8\cdot5^5\cdot7^4\cdot11^3\cdot13^3\cdot17^3\cdot19^2\cdots53^2\cdot59\cdots367,
$$
whose prime factors are the first $73$ prime numbers, shows that Theorem \ref{thm:4} is best possible since $\lambda(n')=1.0008832\dotso$ In fact, one can find similar examples $n$ (that is, with $\lambda(n)>1$) for each $\omega(n)=k \in [3,73]$. Also, the methods used in the proof of Theorem \ref{thm:4} allow one to show that the largest value of $\lambda(n)$, with $\omega(n)=74$, is attained only by the number
$$
n''=2^{13}\cdot3^8\cdot5^5\cdot7^4\cdot11^3\cdot13^3\cdot17^3\cdot19^2\cdots53^2\cdot59\cdots373
$$
and for which $\lambda(n'')=0.99991077\dotso$ (Observe that the number $n'$ realizes the unique maximum of the function $\lambda$ among the integers $n$ with exactly $73$ distinct prime factors.)

By comparing the lower bound in \eqref{jensen1} with \eqref{result} and after some computation, one can show that the inequality
$$
n \ge \omega(n)^{\omega(n)} \qquad (n \ge 2)
$$
holds for each $n$ satisfying $\omega(n) \notin [4,12]$ or $n >43n_{11}$. This helps to understand why Theorem \ref{thm:4} is more powerful than Theorem \ref{thm:2}.
\end{remark}

\begin{theorem} \label{thm:5}
The largest integer $n$ with $k=\omega(n) \ge 44$ for which $\lambda(n) \ge 1$ is the integer made up of the first $44$ primes that has the exponent vector
\begin{eqnarray}\label{number}
\\ \nonumber
& (354, 223, 152, 125, 102, 95, 86, 83, 77, 72, 71, 67, 65, 64, 63, 61, 59, 59, 57, 57, 56, \\ \nonumber
& \hskip 5pt 55, 55, 54, 53, 52, 52, 52, 51, 51, 50, 49, 49, 49, 48, 48, 48, 47, 47, 47, 46, 46, 46, 46). \\ \nonumber
\end{eqnarray}
\end{theorem}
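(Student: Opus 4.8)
The plan is to use Theorem~\ref{thm:4} to restrict $\omega(n)$, and then to carry out an effective finite analysis. Since Theorem~\ref{thm:4} gives $\lambda(n)<1$ whenever $\omega(n)\ge 74$, every integer $n$ with $\omega(n)\ge 44$ and $\lambda(n)\ge 1$ satisfies $44\le\omega(n)\le 73$. It therefore suffices to prove that, for each $k$ in this range, the set $\{n:\omega(n)=k,\ \lambda(n)\ge1\}$ is finite, to identify its largest element $M_k$, and finally to check that $M_{44}$ is the integer displayed in \eqref{number} and that $M_{44}>M_k$ for $45\le k\le 73$.

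Fix $k\in[44,73]$ and write $n=q_1^{\alpha_1}\cdots q_k^{\alpha_k}$ with $q_1<\cdots<q_k$. By \eqref{eq:def-lambda}, $\lambda(n)\ge1$ is equivalent to $\log n\le\bigl(\tau(n)^{1/k}-1\bigr)k\log k$. For a fixed multiset of exponents $\tau(n)$ is fixed while $\log n=\sum_j\alpha_j\log q_j$ is smallest when one takes $q_j=p_j$ and attaches the largest exponents to the smallest primes; this operation does not decrease $\lambda$, so a necessary condition for a multiset $\alpha_1\ge\cdots\ge\alpha_k\ge1$ to be realized by some $n$ with $\lambda(n)\ge1$ is
\[
\prod_{j=1}^{k}(\alpha_j+1)^{1/k}\ \ge\ 1+\frac{1}{k\log k}\sum_{j=1}^{k}\alpha_j\log p_j .
\]
The decisive point is that for $44\le k\le 73$ this has only finitely many solutions, and this rests on the inequality
\[
\Bigl(\textstyle\prod_{j=1}^{k}\log p_j\Bigr)^{1/k}\ >\ \log k\qquad(44\le k\le 73),
\]
which I would obtain by direct evaluation together with a monotonicity estimate for $\frac1k\sum_{j\le k}\log\log p_j-\log\log k$; this inequality is sharp, since it fails at $k=43$, which accounts for the threshold $k\ge44$. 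Granting it, one bounds the exponent vectors thus: along any ray $\alpha^{(0)}+tv$ with $v\ge0$, $v\ne0$, $t\to\infty$, the left side of the feasibility condition grows like $t^{r/k}$ with $r=|\{j:v_j>0\}|$, while its right side grows linearly in $t$ when $r<k$, and when $r=k$ the AM--GM inequality applied to the numbers $v_j\log p_j$ combined with the displayed inequality forces the right side to outgrow the left. Hence the feasible region is bounded, and a quantitative version yields an explicit (for $k=44$, large) bound on $\Omega(n)$ and hence on $n$; in fact $\log n\le \bigl(\tau(n)^{1/k}-1\bigr)k\log k\le\Omega(n)\log k$ already gives $n\le k^{\Omega(n)}$.

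For the search itself one exploits the shape of the extremal vector: maximizing $\sum_j\alpha_j\log p_j$ on the boundary of the feasibility condition, a Lagrange-multiplier computation shows the optimum satisfies $\alpha_j+1=C/\log p_j$ for a constant $C$, so the true maximizer is of this form up to bounded rounding in each coordinate. One then inspects the integer points in this narrow window, prunes with the local-optimality inequalities (no $\alpha_j$ can be raised, and no prime factor replaced by a larger one, without violating $\lambda(n)\ge1$), and checks—using $\log n\le\bigl(\tau(n)^{1/k}-1\bigr)k\log k$ and the fact that enlarging any prime strictly enlarges $n$ while strictly tightening the feasibility condition—that configurations using a prime beyond $p_k$ cannot beat the best first-$k$-prime configuration. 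This determines $M_k$ for every $k$; since the margin in the key inequality is smallest at $k=44$, the admissible window is widest there, $M_{44}$ comes out as the overall maximum, and its exponent vector is the one in \eqref{number}.

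The main obstacle is making the finiteness bound effective while keeping the search feasible: because the key inequality is only barely true at $k=44$, the a priori bound on $\Omega(n)$ there is of order several thousand, so naive enumeration is hopeless, and one genuinely needs the Lagrangian description of the extremal vector, the local-optimality pruning, and the elimination of large-prime configurations to reduce matters to a manageable check. By contrast, verifying the key inequality and performing the final numerical comparison of $M_{44},\dots,M_{73}$ are routine.
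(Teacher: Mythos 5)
Your overall strategy matches the paper's: restrict $\omega(n)$ to $[44,73]$ via Theorem~\ref{thm:4}, reduce to exponent vectors on the first $k$ primes in decreasing order (the form \eqref{shape}), bound the feasible region using the inequality $\beta(n_k)<(\log k)^{-k}$ (equation \eqref{ineq2}), and search near the Lagrangian optimum. The condition $\alpha_j+1=C/\log p_j$ you extract from the multiplier computation is precisely the condition $x_i=\mu$ for the variables $x_i$ of \eqref{x}, and the "bounded rounding" is what the paper measures with $\varpi'$ and $\delta'$.

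The genuine gap is in the passage from "the maximizer is of this form up to bounded rounding" to "a manageable check." You acknowledge this is the main obstacle, but you do not resolve it, and it is not a routine matter: an a priori bound on $\Omega(n)$ is in the thousands, and the naive window around the Lagrangian optimum is far too large to enumerate. The paper's actual proof of Theorem~\ref{thm:5} is almost entirely devoted to shrinking that window. It first pins $\gamma(n')$ down to $n_{44}$ (ruling out any prime beyond $p_{44}$ by a separate $\upsilon(n_S,z_S)$ computation, not by a local-optimality move as you suggest), then confines $\log n'$ to an interval of length about $118$, splits it into unit intervals $I_j$, and uses the two-sided control of the dispersion $\varpi'$ (upper bound $w(j)$ from the concavity estimate \eqref{def:upsilon1}, lower bound from Lemma~\ref{lem:03}) to kill $j\ge 40$. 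The surviving $j$'s still leave a slack $\delta'(j)\approx 0.02$--$0.03$, and it then takes two further rounds of machinery — the split into $\varpi_1',\varpi_2'$ with signatures, the refined functions $\upsilon_{m,m_1,m_2}$, and the sets $J_\delta$ with Type~1 and Type~2 enumerations — to bring $\delta'$ down to a level where direct checking of the remaining exponent vectors is feasible. None of this is suggested by your sketch, and without some quantitative replacement for it, "prune near the Lagrangian optimum" does not yield a finite computation that anyone could run. Also, a minor structural difference: you propose to determine $M_k$ for each $k\in[45,73]$ and compare; the paper instead shows directly that $z_k<n_*$ for these $k$ (so any candidate with $\omega=k$ is already smaller than the claimed maximum), which avoids ever locating $M_k$.
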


\section{Preliminary lemmas}

\begin{definition}\label{def_1}
Let $x_i$, with $i \in \{1,\dotso,k\}$, be fixed real numbers that satisfy $0 < x_1 \le \cdots \le x_k$. Let
$$
\mu:=\frac{x_1+\cdots+x_k}{k}
$$
and
$$
\varpi:=\sum_{i=1}^{k}|x_i-\mu|.
$$
Assume also that $x_1 \le \cdots \le x_m \le \mu \le x_{m+1} \le \cdots \le x_k$ for a fixed $m \in \{1,\dotso,k-1\}$ where $k \ge 2$. Further set
$$
\mu_1:=\frac{x_1+\cdots+x_m}{m}=\mu-\frac{\varpi}{2m},
$$
$$
\mu_2:=\frac{x_{m+1}+\cdots+x_k}{k-m}=\mu+\frac{\varpi}{2(k-m)}
$$
and also
$$
\varpi_1:=\sum_{i=1}^{m}|x_i-\mu_1|
$$
and
$$
\varpi_2:=\sum_{i=m+1}^{k}|x_i-\mu_2|.
$$
\end{definition}

\begin{example}
Here it is how this notation will be used throughout the proof of Theorem 5.  Let's fix an integer $n=q_1^{\alpha_1}\cdots q_k^{\alpha_k}$. For each $i\in \{1,2,\ldots,k\}$, we define $\theta_i$ implicitly by $n^{\theta_i}=q_i^{\alpha_i}$, so that $\theta_1+\cdots+\theta_k=1$. We write
\begin{equation}\label{x}
x_i:=\frac{(\alpha_i+1)\log q_i}{\log n}
\end{equation}
and assume that the primes $q_i$ are ordered in such a way that \eqref{museq} holds. In this case we have
\begin{equation}\label{def:mu}
\mu=\frac{1}{k}\left(1+\frac{\log \gamma(n)}{\log n}\right),
\end{equation}
$$
\varpi=\frac{1}{k}\sum_{i=1}^{k}\left|\frac{(\alpha_i+1)k\log q_i-\log \gamma(n)}{\log n}-1\right| =:\frac{\varpi^\prime}{k},
$$
$$
\mu_1=\frac{1}{k}\left(1+\frac{\log \gamma(n)}{\log n}\right)-\frac{\varpi^\prime}{2km},\quad\mu_2=\frac{1}{k}\left(1+\frac{\log \gamma(n)}{\log n}\right)+\frac{\varpi^\prime}{2k(k-m)},
$$
\begin{equation}\label{w1}
\varpi_1=\frac{1}{k}\sum_{i=1}^{m}\left|\frac{(\alpha_i+1)k\log q_i-\log \gamma(n)}{\log n}-1+\frac{\varpi^\prime}{2m}\right| =:\frac{\varpi_1^\prime}{k}
\end{equation}
and
\begin{equation}\label{w2}
\varpi_2=\frac{1}{k}\sum_{i=m+1}^{k}\left|\frac{(\alpha_i+1)k\log q_i-\log \gamma(n)}{\log n}-1-\frac{\varpi^\prime}{2(k-m)}\right| =:\frac{\varpi_2^\prime}{k}.
\end{equation}
\end{example}

\begin{lemma}\label{lem:01}
{\bf (i)} For $k \ge 1$ we have
\begin{equation} \label{je1}
x_1\cdots x_k \le \mu^k.
\end{equation}
{\bf (ii)} For $k \ge 2$ we have
\begin{equation} \label{je2}
x_1\cdots x_k \le \mu_1^m\mu_2^{k-m}.
\end{equation}
{\bf (iii)} For $k \ge 4$, let $m \in \{2,\dots,k-2\}$, $m_1 \in \{1,\dots,m-1\}$, $m_2 \in \{1,\dots,k-m-1\}$ and assume that
\begin{equation}\label{museq}
{\scriptstyle
 0 < x_1 \le \dots \le x_{m_1} \le \mu_1 \le x_{m_1+1} \le \dots \le x_m \le \mu \le x_{m+1} \le \dots \le x_{m+m_2} \le \mu_2 \le x_{m+m_2+1} \le \dots \le x_k}.
\end{equation}
Then,
\begin{equation} \label{je3} {\scriptstyle
x_1 \cdots x_k \le \left(\mu_1-\frac{\varpi_1}{2m_1}\right)^{m_1}\left(\mu_1+\frac{\varpi_1}{2(m-m_1)}\right)^{m-m_1}\left(\mu_2-\frac{\varpi_2}{2m_2}\right)^{m_2}\left(\mu_2+\frac{\varpi_2}{2(k-m-m_2)}\right)^{k-m-m_2}.}
\end{equation}
\end{lemma}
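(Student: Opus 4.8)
The plan is to derive all three parts of Lemma~\ref{lem:01} as successive applications of the arithmetic--geometric mean (AM--GM) inequality to coarser and coarser partitions of the index set $\{1,\dots,k\}$; the only ingredient beyond AM--GM is the elementary identity that re-expresses the mean of a block of the $x_i$ in terms of the ambient mean and the $L^1$-deviation. For part~(i) the AM--GM inequality gives immediately $x_1\cdots x_k\le\bigl(\tfrac1k\sum_{i=1}^k x_i\bigr)^k=\mu^k$, which is \eqref{je1}.

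For part~(ii) I would first record the identities already implicit in Definition~\ref{def_1}. Since $x_1\le\cdots\le x_m\le\mu\le x_{m+1}\le\cdots\le x_k$, we have $\varpi=\sum_{i=1}^m(\mu-x_i)+\sum_{i=m+1}^k(x_i-\mu)$, while $\sum_{i=1}^m(\mu-x_i)-\sum_{i=m+1}^k(x_i-\mu)=k\mu-\sum_{i=1}^k x_i=0$; hence each of these two sums equals $\varpi/2$, so that $x_1+\cdots+x_m=m\mu-\varpi/2=m\mu_1$ and $x_{m+1}+\cdots+x_k=(k-m)\mu+\varpi/2=(k-m)\mu_2$, and in particular $\mu_1,\mu_2>0$. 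Applying AM--GM separately to the block $x_1,\dots,x_m$ and to the block $x_{m+1},\dots,x_k$ then yields $x_1\cdots x_m\le\mu_1^{\,m}$ and $x_{m+1}\cdots x_k\le\mu_2^{\,k-m}$; multiplying these two inequalities gives \eqref{je2}.

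Part~(iii) will then follow by applying part~(ii) one level deeper, inside each of the two blocks separately. Under the ordering hypothesis \eqref{museq}, the $m$-tuple $x_1\le\cdots\le x_m$ has mean $\mu_1$ and is split by $\mu_1$ at the index $m_1$, so part~(ii) applied to this $m$-tuple --- with the role of $\varpi$ now played by $\varpi_1=\sum_{i=1}^m|x_i-\mu_1|$ and with the two sub-block means equal to $\mu_1-\varpi_1/(2m_1)$ and $\mu_1+\varpi_1/(2(m-m_1))$ by the same identity as above --- gives $x_1\cdots x_m\le\bigl(\mu_1-\tfrac{\varpi_1}{2m_1}\bigr)^{m_1}\bigl(\mu_1+\tfrac{\varpi_1}{2(m-m_1)}\bigr)^{m-m_1}$. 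The symmetric argument applied to $x_{m+1}\le\cdots\le x_k$, which has mean $\mu_2$ and is split by $\mu_2$ at the index $m+m_2$, gives $x_{m+1}\cdots x_k\le\bigl(\mu_2-\tfrac{\varpi_2}{2m_2}\bigr)^{m_2}\bigl(\mu_2+\tfrac{\varpi_2}{2(k-m-m_2)}\bigr)^{k-m-m_2}$; multiplying the two bounds is precisely \eqref{je3}.

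I do not expect any real obstacle here: the hypotheses on $k,m,m_1,m_2$ are exactly what guarantees that the four exponents $m_1$, $m-m_1$, $m_2$, $k-m-m_2$ are positive integers, and the ordering \eqref{museq} is exactly what is needed for the ``folded'' expressions defining $\varpi_1$ and $\varpi_2$ to agree with twice the gap between each block mean and its sub-block means. The one point requiring a bit of care is purely bookkeeping --- correctly matching the generic quadruple $(\mu,\varpi,\mu_1,\mu_2)$ of Definition~\ref{def_1} to the shifted data built around $\mu_1$ (with deviation $\varpi_1$) and around $\mu_2$ (with deviation $\varpi_2$) when part~(ii) is invoked the second and third times.
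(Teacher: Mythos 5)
Your proof is correct and takes the same approach as the paper, whose entire proof is the single sentence ``In each case, we simply use the arithmetic-geometric inequality for the corresponding sub-product of variables for which we know the average.'' You have simply made explicit the bookkeeping (the identities $\mu_1=\mu-\varpi/(2m)$, $\mu_2=\mu+\varpi/(2(k-m))$, and their analogues one level down) that the paper leaves implicit in Definition~\ref{def_1}.
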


\begin{proof}
In each case, we simply use the arithmetic-geometric inequality for the corresponding sub-product of variables for which we know the average.
\end{proof}

\begin{lemma}\label{ineqfond}
Let $k\ge 1$ be an integer, $z_i>0$ and $x_i\ge \frac{-1}{z_i}$ be real numbers for $i=1,\dotso,k$, and assume that
$$
x_1+\cdots+x_k=1.
$$
Then,
\begin{equation}\label{jensen3}
\prod_{i=1}^k (1+x_iz_i)\le \prod_{i=1}^{k}\left(\frac{z_i}{k}\right)\biggl(1+\sum_{i=1}^k \frac 1{z_i}\biggr)^k,
\end{equation}
with equality if and only if
$$
x_i=\frac 1k \biggl(1+\sum_{j=1}^k \frac 1{z_j}\biggr)-\frac{1}{z_i} \qquad (i=1,\ldots,k).
$$
\end{lemma}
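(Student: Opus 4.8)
The plan is to reduce the statement directly to the arithmetic–geometric mean inequality by means of the substitution $y_i := 1 + x_i z_i$. The hypothesis $x_i \ge -1/z_i$ guarantees that each $y_i \ge 0$; and if some $y_i = 0$, then the left-hand side of \eqref{jensen3} vanishes while the right-hand side is manifestly nonnegative, so that case is immediate. Hence we may assume $y_i > 0$ for every $i$.

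With this substitution, the key observation is that $y_i/z_i = 1/z_i + x_i$, so that, using the hypothesis $x_1 + \cdots + x_k = 1$,
$$
\sum_{i=1}^k \frac{y_i}{z_i} = \sum_{i=1}^k \frac{1}{z_i} + \sum_{i=1}^k x_i = 1 + \sum_{i=1}^k \frac{1}{z_i}.
$$
Applying the AM–GM inequality to the positive numbers $y_1/z_1,\dots,y_k/z_k$ then gives
$$
\prod_{i=1}^k \frac{y_i}{z_i} \le \left(\frac{1}{k}\sum_{i=1}^k \frac{y_i}{z_i}\right)^{\!k} = \left(\frac{1}{k}\biggl(1+\sum_{i=1}^k \frac{1}{z_i}\biggr)\right)^{\!k},
$$
and multiplying through by $\prod_{i=1}^k z_i$ yields precisely \eqref{jensen3}.

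Finally, I would record the equality case: equality in AM–GM holds exactly when $y_1/z_1 = \cdots = y_k/z_k$, and since this common value is forced to equal the average $\frac{1}{k}(1+\sum_j 1/z_j)$, we obtain $1/z_i + x_i = \frac{1}{k}(1+\sum_j 1/z_j)$ for each $i$, that is, $x_i = \frac{1}{k}(1+\sum_j 1/z_j) - 1/z_i$, which is the asserted condition. There is no genuine obstacle here; the only points requiring a moment's care are the degenerate case in which some factor $1+x_iz_i$ equals $0$ (handled before invoking AM–GM, as above) and the remark that the proposed equality vector is admissible — it automatically satisfies $x_i \ge -1/z_i$ because $\frac{1}{k}(1+\sum_j 1/z_j) \ge 0$, and it satisfies $\sum_i x_i = 1$ since the $1/z_i$ terms telescope against the sum inside.
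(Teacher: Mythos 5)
Your proof is correct and follows essentially the same route as the paper: the substitution $y_i = 1 + x_i z_i$ with the observation $y_i/z_i = x_i + 1/z_i$ is exactly the factorization $1 + x_i z_i = z_i\bigl(x_i + \tfrac{1}{z_i}\bigr)$ that the paper uses before invoking AM--GM. The extra remarks about the degenerate case $y_i = 0$ and the admissibility of the equality vector are sound but inessential, since the paper's one-line AM--GM argument already covers the boundary.
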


\begin{proof}
Using the arithmetic geometric mean inequality, the hypothesis $z_i>0$ and the fact that for each $i$ we have $1+x_iz_i\ge 0$, we can write
\begin{eqnarray*}
\prod_{i=1}^k (1+x_iz_i) &= & \prod_{i=1}^k z_i\prod_{j=1}^k \left(x_j+\frac 1{z_j}\right)\\
&\le & \prod_{i=1}^k \left(\frac{z_i}{k}\right)\biggl(\sum_{j=1}^{k} \left(x_j+\frac{1}{z_j}\right)\biggr)^k\\
&= & \prod_{i=1}^{k}\left(\frac{z_i}{k}\right)\biggl(1+\sum_{j=1}^{k}\frac{1}{z_j}\biggr)^{k}.
\end{eqnarray*}
We have equality if and only if
$$
x_i+\frac{1}{z_i}=\frac{1}{k}\biggl(1+\sum_{j=1}^{k}\frac{1}{z_j}\biggr) \qquad (i=1,\ldots,k),
$$
thus completing the proof.
\end{proof}

\begin{corollary} \label{cor:1}
Assume the above notation. Then, for every integer $n\ge 2$,
\begin{equation}\label{fond1}
\tau(n)\le \left(\frac{\log n}{\omega(n)}\right)^{\omega(n)}\left(1+\frac{\log \gamma(n)}{\log n}\right)^{\omega(n)}\beta(n)
\end{equation}
and
\begin{equation}\label{fond2}
\tau(n)\le \left(\frac{2\log n}{\omega(n)}\right)^{\omega(n)}\beta(n).
\end{equation}
\end{corollary}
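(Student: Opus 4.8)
The plan is to deduce both inequalities directly from Lemma \ref{ineqfond} by a single well-chosen substitution. Write $n = q_1^{\alpha_1}\cdots q_k^{\alpha_k}$ with $q_1<\cdots<q_k$ and $k=\omega(n)\ge 1$, so that $\tau(n)=\prod_{i=1}^{k}(\alpha_i+1)$. The idea is to pick the $x_i$ and $z_i$ in Lemma \ref{ineqfond} so that the factor $1+x_iz_i$ becomes $\alpha_i+1$ while the constraint $\sum x_i=1$ is automatically satisfied.

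Concretely, I would set
$$
x_i:=\frac{\alpha_i\log q_i}{\log n},\qquad z_i:=\frac{\log n}{\log q_i}\qquad(i=1,\ldots,k).
$$
Since $n\ge 2$ and each $q_i\ge 2$, we have $z_i>0$ and $x_i>0>-1/z_i$, and moreover $\sum_{i=1}^{k}x_i=\frac{1}{\log n}\sum_{i=1}^{k}\alpha_i\log q_i=1$, so the hypotheses of Lemma \ref{ineqfond} hold. With this choice $1+x_iz_i=\alpha_i+1$, so the left-hand side of \eqref{jensen3} is exactly $\tau(n)$. On the right-hand side, $\sum_{i=1}^{k}\frac{1}{z_i}=\frac{1}{\log n}\sum_{i=1}^{k}\log q_i=\frac{\log\gamma(n)}{\log n}$ and $\prod_{i=1}^{k}\frac{z_i}{k}=\left(\frac{\log n}{k}\right)^{k}\prod_{i=1}^{k}\frac{1}{\log q_i}=\left(\frac{\log n}{k}\right)^{k}\beta(n)$. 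Substituting these two identities into \eqref{jensen3} yields \eqref{fond1} immediately.

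For \eqref{fond2}, since $\gamma(n)\mid n$ we have $\gamma(n)\le n$, hence $1+\frac{\log\gamma(n)}{\log n}\le 2$; inserting this bound into \eqref{fond1} gives the claim. There is no real obstacle here: the only step requiring a moment's thought is guessing the weights $z_i=\log n/\log q_i$, which are forced by the two requirements $1+x_iz_i=\alpha_i+1$ and $\sum_i x_i=1$. Finally I would note that $\left(\frac{\log n}{k}\right)^{k}\left(1+\frac{\log\gamma(n)}{\log n}\right)^{k}=\left(\frac{\log(n\gamma(n))}{k}\right)^{k}$, so \eqref{fond1} is precisely Ramanujan's inequality \eqref{raman-i}, which is the promised short proof.
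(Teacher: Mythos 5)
Your proof is correct and is essentially the paper's proof: the paper writes $\theta_i = \alpha_i\log q_i/\log n$ and applies Lemma \ref{ineqfond} with the implicit weights $z_i = \log n/\log q_i$, which is exactly your substitution. Your final observation that \eqref{fond1} equals Ramanujan's bound \eqref{raman-i} is also correct and matches the remark made just after the corollary's statement.
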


\begin{proof}[Proof of Corollary \ref{cor:1}]
Using inequality \eqref{jensen3} we have
$$
\tau(n)=\prod_{i=1}^{k}(1+\alpha_i)=\prod_{i=1}^{k}\left(1+\frac{\theta_i\log n}{\log q_i}\right)\le \left(\frac{\log n}{k}\right)^{k}\left(1+\frac{\log \gamma(n)}{\log n}\right)^{k}\prod_{p\mid n}\frac{1}{\log p},
$$
which proves (\ref{fond1}). Since $\log \gamma(n)\le \log n$, inequality (\ref{fond2}) follows immediately from (\ref{fond1}).
\end{proof}

In any event, observe that it follows from Corollary \ref{cor:1} that
\begin{equation}\label{ineq5}
\lambda(n)\le \biggl(\prod_{p\mid n}\frac{\log k}{\log p}\biggr)^{1/k}+\frac{\log \gamma(n)}{\log n}\biggl(\prod_{p\mid n}\frac{\log k}{\log p}\biggr)^{1/k}-\frac{k\log k}{\log n}.
\end{equation}

\begin{lemma} \label{lem:02}
{\bf (i)} Assume that $\mu > 0$, $\varpi \ge 0$, $m \ge 1$, $k-m \ge 1$ and $\mu-\frac{\varpi}{2m} > 0$. Then, the function
\begin{equation}\label{g1}
\left(\mu-\frac{\varpi}{2m}\right)^m\left(\mu+\frac{\varpi}{2(k-m)}\right)^{k-m}
\end{equation}
decreases when $\varpi$ increases.

\noindent{\bf (ii)} Assume that $\mu > 0$, $\varpi_1 \ge 0$, $\varpi_2 \ge 0$, $m \ge 1$, $k-m \ge 1$, $m_1 \ge 1$, $m_2 \ge 1$, $m-m_1 \ge 1$, $k-m-m_2 \ge 1$, $\mu-\frac{\varpi}{2m}-\frac{\varpi_1}{2m_1} > 0$ and $\mu+\frac{\varpi}{2(k-m)}-\frac{\varpi_2}{2m_2} > 0$. Then, the function
\begin{eqnarray}\label{f}
f(\varpi) & := & {\scriptstyle\left(\mu-\frac{\varpi}{2m}-\frac{\varpi_1}{2m_1}\right)^{m_1}\left(\mu-\frac{\varpi}{2m}+\frac{\varpi_1}{2(m-m_1)}\right)^{m-m_1}} \\ \nonumber
& & {\scriptstyle\times\left(\mu+\frac{\varpi}{2(k-m)}-\frac{\varpi_2}{2m_2}\right)^{m_2}\left(\mu+\frac{\varpi}{2(k-m)}+\frac{\varpi_2}{2(k-m-m_2)}\right)^{k-m-m_2}}
\end{eqnarray}
has the property that if $f^\prime(\varpi_0) < 0$ for some $\varpi_0 > 0$, then $f(\varpi) < f(\varpi_0)$ for each $\varpi > \varpi_0$.

\noindent{\bf (iii)} Assume that $A > 0$, $B > 0$, $C > 0$, $z > 0$, $\gamma_1 \ge 0$, $\gamma_2 \ge 0$, $\varrho_1 \ge 0$, $\varrho_2 \ge 0$, $\varrho_1+\varrho_2 = 1$ and $C < AB$. Then, the expression
\begin{equation} \label{z2}
B\left(\gamma_1+\frac{A}{z}\right)^{\varrho_1}\left(\gamma_2+\frac{A}{z}\right)^{\varrho_2}-\frac{C}{z}
\end{equation}
decreases when $z$ increases.

\noindent{\bf (iv)} Assume that $A > 0$, $B > 0$, $C > 0$, $z > 0$, $\gamma_1 \ge 0$, $\gamma_2 \ge 0$, $\gamma_3 \ge 0$, $\gamma_4 \ge 0$, $\varrho_1 \ge 0$, $\varrho_2 \ge 0$, $\varrho_3 \ge 0$, $\varrho_4 \ge 0$, $\varrho_1+\varrho_2+\varrho_3+\varrho_4= 1$ and $C < AB$. Then, the expression
\begin{equation} \label{z4}
B\left(\gamma_1+\frac{A}{z}\right)^{\varrho_1}\left(\gamma_2+\frac{A}{z}\right)^{\varrho_2}\left(\gamma_3+\frac{A}{z}\right)^{\varrho_3}\left(\gamma_4+\frac{A}{z}\right)^{\varrho_4}-\frac{C}{z}
\end{equation}
decreases when $z$ increases.
\end{lemma}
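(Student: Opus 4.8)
The plan is to handle the four parts with two mechanisms. Parts (i) and (ii) concern a product of powers of affine functions of $\varpi$, so I would pass to logarithms: (i) comes from a one-line sign computation, while (ii) comes from the observation that $\log f$ is concave in $\varpi$. Parts (iii) and (iv) are the same statement for two, respectively four, factors, and I would reduce both --- via the substitution $u=1/z$ --- to the monotonicity of a single auxiliary function, which the weighted arithmetic--geometric mean inequality settles at once. For (i), write $g(\varpi)$ for the quantity in \eqref{g1}; it is positive on the range where $\mu-\frac{\varpi}{2m}>0$, and there
$$
\frac{d}{d\varpi}\log g(\varpi)=\frac12\left(\frac{1}{\mu+\frac{\varpi}{2(k-m)}}-\frac{1}{\mu-\frac{\varpi}{2m}}\right).
$$
Since $\varpi\ge0$ gives $0<\mu-\frac{\varpi}{2m}\le\mu\le\mu+\frac{\varpi}{2(k-m)}$, the bracket is $\le0$ (and $<0$ once $\varpi>0$), so $g$ decreases.

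For (ii), the crucial remark is that, under the stated hypotheses, each of the four bases in \eqref{f} is a strictly positive affine function of $\varpi$ on the relevant interval: the positivity of $\mu-\frac{\varpi}{2m}-\frac{\varpi_1}{2m_1}$ and of $\mu+\frac{\varpi}{2(k-m)}-\frac{\varpi_2}{2m_2}$ is assumed, and the positivity of the remaining two bases then follows, using in addition that $\varpi_1,\varpi_2\ge0$. Consequently $\log f$ is a finite sum of terms each of which is a positive multiple of the logarithm of an affine function of $\varpi$, hence concave, so $\log f$ is concave in $\varpi$. Since $f>0$, the functions $f'$ and $(\log f)'$ have the same sign; thus if $f'(\varpi_0)<0$ then $(\log f)'(\varpi_0)<0$, and by concavity of $\log f$ we get $(\log f)'(\varpi)\le(\log f)'(\varpi_0)<0$ for every $\varpi>\varpi_0$ in the domain. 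Hence $f$ is strictly decreasing there, so $f(\varpi)<f(\varpi_0)$.

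For (iii) and (iv), I would put $u=1/z$, so that $u$ decreases as $z$ increases, and it suffices to show that
$$
\widetilde h(u):=B\prod_{j}(\gamma_j+Au)^{\varrho_j}-Cu,
$$
with $j$ running over $\{1,2\}$ for (iii) and over $\{1,2,3,4\}$ for (iv), is increasing in $u$. Writing $s_j:=\gamma_j+Au$, which is positive for $u>0$ since $\gamma_j\ge0$ and $A>0$, and differentiating the product logarithmically,
$$
\widetilde h'(u)=AB\Bigl(\prod_{j}s_j^{\varrho_j}\Bigr)\Bigl(\sum_{j}\frac{\varrho_j}{s_j}\Bigr)-C.
$$
The weighted arithmetic--geometric mean inequality, applied to the positive numbers $1/s_j$ with the weights $\varrho_j$ (which sum to $1$), gives $\sum_j\varrho_j/s_j\ge\prod_j s_j^{-\varrho_j}$, hence $\bigl(\prod_j s_j^{\varrho_j}\bigr)\bigl(\sum_j\varrho_j/s_j\bigr)\ge1$, and therefore $\widetilde h'(u)\ge AB-C>0$. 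Thus $\widetilde h$ is strictly increasing in $u$, that is, the expressions \eqref{z2} and \eqref{z4} are strictly decreasing in $z$; the degenerate cases in which some $\varrho_j$ vanishes are included, since the corresponding factor is then constant.

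I do not anticipate a genuine obstacle here. The only points that need care are bookkeeping the positivity of the various bases --- automatic in (iii) and (iv), and precisely what the hypotheses provide in (i) and (ii) --- and, in (ii), the understanding that the conclusion is to be read on the interval on which $f$ is genuinely defined (equivalently, on which $\mu-\frac{\varpi}{2m}-\frac{\varpi_1}{2m_1}>0$), this being the regime relevant to the later application in the proof of Theorem \ref{thm:5}. The two substantive ideas are the concavity of $\log f$ in (ii) and the substitution $u=1/z$ followed by weighted arithmetic--geometric mean in (iii) and (iv); everything else is a routine verification.
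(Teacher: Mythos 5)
Your proof is correct and takes essentially the same approach as the paper: logarithmic derivative for (i), concavity of $\log f$ (which the paper verifies directly via $(f'/f)'<0$) for (ii), and the weighted arithmetic--geometric mean / Jensen inequality for (iii) and (iv), the only cosmetic difference being your substitution $u=1/z$ which the paper does not make. Your side remark about the domain of validity in (ii) is a correct reading of what the paper leaves implicit.
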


\begin{proof}
{\bf (i)} Since the function \eqref{g1} is assumed to be positive, it follows that its derivative with respect to $\varpi$ has the same sign than its logarithmic derivative with respect to $\varpi$. Then, since the logarithmic derivative is
$$
\frac{-1}{2\mu-\frac{\varpi}{m}}+\frac{1}{2\mu+\frac{\varpi}{k-m}},
$$
it is clearly strictly negative when $\varpi > 0$.

\noindent{\bf (ii)} Again, the function $f$ is assumed to be positive in which case its derivative with respect to $\varpi$ has the same sign than its logarithmic derivative with respect to $\varpi$. Also, we have
\begin{eqnarray*}
\left(\frac{f^\prime(\varpi)}{f(\varpi)}\right)^\prime & = & {\scriptstyle\frac{-m_1}{m^2\bigl(2\mu-\frac{\varpi}{m}-\frac{\varpi_1}{m_1}\bigr)^2}- \frac{m-m_1}{m^2\bigl(2\mu-\frac{\varpi}{m}+\frac{\varpi_1}{m-m_1}\bigr)^2}}\\
&& {\scriptstyle-\frac{m_2}{(k-m)^2\bigl(2\mu+\frac{\varpi}{k-m}-\frac{\varpi_2}{m_2}\bigr)^2} -\frac{k-m-m_2}{(k-m)^2\bigl(2\mu+\frac{\varpi}{k-m}-\frac{\varpi_2}{k-m-m_2}\bigr)^2}}
\end{eqnarray*}
which is clearly negative. We deduce that if $f^\prime(\varpi_0) < 0$ for some $\varpi_0 > 0$, then $f^\prime(\varpi) < 0$ for each $\varpi > \varpi_0$ which in turn implies
$$
f(\varpi)-f(\varpi_0)=\int_{\varpi_0}^{\varpi}f^\prime(t)dt<0
$$
for each $\varpi > \varpi_0$, thus establishing our claim.

\noindent{\bf (iii)} We take the derivative of \eqref{z2} with respect to $z$ and multiply by $z^2$. We then see that the wanted property is equivalent to
\begin{equation}\label{CAB}
C < AB\left(\gamma_1+\frac{A}{z}\right)^{\varrho_1}\left(\gamma_2+\frac{A}{z}\right)^{\varrho_2}\biggl(\frac{\varrho_1}{\gamma_1+\frac{A}{z}}+\frac{\varrho_2}{\gamma_2+\frac{A}{z}}\biggr).
\end{equation}
Now, from Jensen's inequality for the exponential function, we have
$$
\frac{1}{z_1^{\varrho_1}z_2^{\varrho_2}} \le \frac{\varrho_1}{z_1}+\frac{\varrho_2}{z_2} \qquad (z_1,z_2>0).
$$
We deduce that the hypothesis $C < AB$ implies \eqref{CAB}. {\bf (iv)} is done in the same manner and the proof is complete.
\end{proof}

\begin{lemma} \label{lem:03}
Let $A$ and $B$ be fixed positive real constants. Consider the function $\psi:=\Z\times\R^{*}\times\R\rightarrow\R_{\ge0}$ defined by
\begin{equation}\label{fonc1}
\psi(\alpha,x,\varphi)=\left|\frac{(\alpha+1)B-A}{x}-\varphi\right|.
\end{equation}
{\bf (i)} Assume that $x_1, \varphi_1> 0$. The minimum of the function $\psi(\alpha,x,\varphi)$ for $\alpha \in \Z$, $x \in [x_1,x_2]$ and $\varphi \in [\varphi_1,\varphi_2]$ is either $0$ or is given by the minimum over the eight possibilities provided by
$$
\alpha\in\left\{\left\lfloor\frac{x_2\varphi_2+A}{B}\right\rfloor-1,\left\lceil\frac{x_1\varphi_1+A}{B}\right\rceil-1\right\},\quad x \in \{x_1,x_2\}\quad\mbox{and}\quad \varphi \in \{\varphi_1,\varphi_2\}.
$$
The minimum is $0$ if and only if
\begin{equation}\label{equi1}
\left\lceil\frac{x_1\varphi_1+A}{B}\right\rceil \le \left\lfloor\frac{x_2\varphi_2+A}{B}\right\rfloor.
\end{equation}
\noindent{\bf (ii)} Let $\delta > 0$ be a fixed real number and assume that $x_1, \varphi_1> 0$. The minimum of the function  $\psi(\alpha,x,1)$ for $x \in [x_1,x_2]$ and $\alpha \in \Z\backslash{\scriptstyle\left\{\left\lceil\frac{(1-\delta)x_1+A}{B}\right\rceil-1,\dots,\left\lfloor\frac{(1+\delta)x_2+A}{B}\right\rfloor-1\right\}}$ is given by the minimum over the four possibilities provided by
$$
\alpha\in\left\{\left\lceil\frac{(1-\delta)x_1+A}{B}\right\rceil-2,\left\lfloor\frac{(1+\delta)x_2+A}{B}\right\rfloor\right\}\quad\mbox{and}\quad x \in \{x_1,x_2\}.
$$
\end{lemma}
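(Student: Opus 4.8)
The plan is to rewrite $\psi(\alpha,x,\varphi)=\frac{1}{x}\bigl|(\alpha+1)B-A-x\varphi\bigr|$ (legitimate since $x\ne0$) and to exploit repeatedly that, on the box $D:=[x_1,x_2]\times[\varphi_1,\varphi_2]$, the bracketed quantity is, for each fixed $\alpha$, monotone in $x$ and monotone in $\varphi$ separately, so that once its sign is pinned down $\psi$ becomes an explicitly monotone function and is extremized at endpoints, in the same spirit as Lemma \ref{lem:02}. Two ingredients then drive everything: a description of exactly when $\psi$ can attain a prescribed small value on $D$, and a pruning of the range of $\alpha$ down to two values.

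For part (i) I would first settle the ``minimum equals $0$'' alternative. Since $x_1,\varphi_1>0$, the product $x\varphi$ is increasing in each variable, so $\frac{A+x\varphi}{B}$ sweeps exactly the interval $I:=\bigl[\frac{x_1\varphi_1+A}{B},\frac{x_2\varphi_2+A}{B}\bigr]$ as $(x,\varphi)$ runs over $D$; hence $\psi$ vanishes somewhere on $\Z\times D$ if and only if $I$ contains an integer, i.e.\ if and only if $\lceil\frac{x_1\varphi_1+A}{B}\rceil\le\lfloor\frac{x_2\varphi_2+A}{B}\rfloor$, which is \eqref{equi1}. When \eqref{equi1} fails, $I$ contains no integer, both endpoints are non-integral, and they share a common integer part $N$, so $\lfloor\frac{x_2\varphi_2+A}{B}\rfloor=N$ and $\lceil\frac{x_1\varphi_1+A}{B}\rceil=N+1$. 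Now I prune: if $\alpha+1\le N-1$ then $(\alpha+1)B<A+x\varphi$ throughout $D$, so $\psi=\frac{1}{x}\bigl(A+x\varphi-(\alpha+1)B\bigr)$ is decreasing in $\alpha$, whence $\psi(\alpha,x,\varphi)\ge\psi(N-2,x,\varphi)=\psi(N-1,x,\varphi)+\frac{B}{x}>\psi(N-1,x,\varphi)$ on $D$; symmetrically, $\alpha+1\ge N+2$ gives $(\alpha+1)B>A+x\varphi$ on $D$, $\psi$ increasing in $\alpha$, and $\psi(\alpha,x,\varphi)>\psi(N,x,\varphi)$ on $D$. Hence a minimizing $\alpha$ lies in $\{N-1,N\}$, the set written in the statement. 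Finally, for each such fixed $\alpha$, with $c:=(\alpha+1)B-A$ constant we have $\psi=\bigl|\frac{c}{x}-\varphi\bigr|$ where $\frac{c}{x}-\varphi$ is monotone in $x$, strictly decreasing in $\varphi$, and of constant sign on $D$ (it cannot vanish there, else the global minimum would be $0$), so $\psi$ is minimized over $D$ at one of the four corners. Combining the two reductions yields the eight candidate points.

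For part (ii), with $\varphi=1$ fixed, the decisive remark is that the excluded window of $\alpha$ is precisely the set of $\alpha$ for which $\psi(\alpha,x,1)\le\delta$ is possible for some $x\in[x_1,x_2]$: indeed $\psi(\alpha,x,1)\le\delta$ is equivalent to $(1-\delta)x\le(\alpha+1)B-A\le(1+\delta)x$, and such an $x\in[x_1,x_2]$ exists if and only if $\frac{(1-\delta)x_1+A}{B}\le\alpha+1\le\frac{(1+\delta)x_2+A}{B}$, that is, $\alpha\in\{P-1,\dots,Q-1\}$ with $P:=\lceil\frac{(1-\delta)x_1+A}{B}\rceil$ and $Q:=\lfloor\frac{(1+\delta)x_2+A}{B}\rfloor$. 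For an admissible $\alpha$ the sign of $(\alpha+1)B-A-x$ is then constant on $[x_1,x_2]$: if $\alpha+1\le P-1$ then $(\alpha+1)B\le(P-1)B<(1-\delta)x_1+A\le x+A$, so $\psi(\alpha,x,1)=1+\frac{A-(\alpha+1)B}{x}$ is decreasing in $\alpha$ and minimized at $\alpha+1=P-1$; if $\alpha+1\ge Q+1$ then $(\alpha+1)B\ge(Q+1)B>(1+\delta)x_2+A\ge x+A$, so $\psi(\alpha,x,1)=\frac{(\alpha+1)B-A}{x}-1$ is increasing in $\alpha$ and minimized at $\alpha+1=Q+1$. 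For each of the two surviving values $\alpha\in\{P-2,Q\}$, $\psi(\alpha,x,1)$ is affine in $1/x$, hence monotone on $[x_1,x_2]$ and minimized at an endpoint, which produces the four claimed points.

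I do not expect any conceptual difficulty; the care lies entirely in the floor/ceiling bookkeeping — checking in part (i) that the two endpoints of $I$ really share the integer part $N$ and that $\{N-1,N\}$ coincides with the set in the statement, and in part (ii) matching the excluded window and the survivors $P-2,Q$ to the stated formulas — and in verifying that each sign determination holds uniformly over the entire $x$-range, which is precisely where the hypotheses $x_1,\varphi_1>0$ and $\delta>0$ are used.
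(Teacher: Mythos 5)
Your proof is correct and takes essentially the same route as the paper's: characterize the zero case by noting that $(x\varphi+A)/B$ sweeps the interval $[\frac{x_1\varphi_1+A}{B},\frac{x_2\varphi_2+A}{B}]$, prune to two candidate values of $\alpha$ on each side of the gap by a monotonicity-in-$\alpha$ argument, and then observe that for each fixed candidate $\alpha$ the bracket has constant sign on the box so the extremum is at a corner. The only cosmetic difference is that you make the constant-sign observation explicit (which also handles the ``minimum is not $0$'' point in (ii) in one stroke), whereas the paper states the sweep/sign facts more tersely and treats the impossibility of the zero in (ii) as a separate short computation.
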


\begin{proof}
{\bf (i)} First, assume that the minimum is $0$. Choose $(\alpha,x,\varphi)$ that realizes $0$. We deduce that $\alpha+1=\frac{x\varphi+A}{B}$ and then it is equivalent to having \eqref{equi1}. Now, assume that the minimum is not zero. In this case, $\left\lfloor\frac{x_2\varphi_2+A}{B}\right\rfloor < \left\lceil\frac{x_1\varphi_1+A}{B}\right\rceil = \left\lfloor\frac{x_2\varphi_2+A}{B}\right\rfloor+1$. Also, if $(\alpha,x,\varphi)$ realizes the minimum then there are two cases. We either have $\frac{(\alpha+1)B-A}{x}-\varphi \ge 0$, in which case $\alpha+1 \ge \left\lceil\frac{x_1\varphi_1+A}{B}\right\rceil$, or we have $\frac{(\alpha+1)B-A}{x}-\varphi \le 0$ , in which case we have  $\alpha+1 \le \left\lfloor\frac{x_2\varphi_2+A}{B}\right\rfloor$. It is then clear that the minimum is attained for $\alpha \in \left\{\left\lfloor\frac{x_2\varphi_2+A}{B}\right\rfloor-1,\left\lceil\frac{x_1\varphi_1+A}{B}\right\rceil-1\right\}$. To conclude, we remark that, once $\alpha$ is fixed, $\frac{(\alpha+1)B-A}{x}-\varphi$ attained its extremum at the edges of the intervals since it is a sum of independent monotone functions.

\noindent{\bf (ii)} The choice for $\alpha$ is clear. Also, if we assume that the minimum is not $0$ then the choice for $x$ is also clear. Now, assume the contrary, that the minimum is $0$ and that it is attained at $(\alpha,x)$ with $\alpha=\left\lceil\frac{(1-\delta)x_1+A}{B}\right\rceil-2=\frac{(1-\delta)x_1+A}{B}-2+\xi$ for some $\xi \in [0,1]$. In this case,
$$
{\scriptstyle\frac{(\alpha+1)B-A}{x}=\frac{\left(\frac{(1-\delta)x_1+A}{B}-2+\xi+1\right)B-A}{x}=\frac{(1-\delta)x_1+(\xi-1)B}{x} \le 1-\delta<1}
$$
and similarly for the other choice of $\alpha$. This shows that the minimum is not $0$ and the proof is complete.
\end{proof}

\begin{lemma}  \label{lem:2}
We have
\begin{equation}\label{ineq3}
\sum_{i=1}^{k}\log p_i \le k(\log k+\log\log k-1/2)\qquad\mbox{for }\ k\ge 5,
\end{equation}

\begin{equation}\label{ineq4}
\sum_{i=1}^{k}\log\log p_i\ge k\left(\log\log k+\frac{\log\log k-3/2}{\log k}\right)\qquad\mbox{for}\ k\ge 6
\end{equation}
and
\begin{equation}\label{ineq2}
\beta(n_k) \le (\log k)^{-k} \qquad\mbox{for}\ k\ge 44.
\end{equation}
\end{lemma}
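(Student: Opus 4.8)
The plan is to reduce each of the three inequalities to an explicit estimate for the Chebyshev function $\vartheta$ or for the $k$-th prime $p_k$, together with a short finite verification. The starting observation is that $\sum_{i=1}^{k}\log p_i=\vartheta(p_k)$ and $\sum_{i=1}^{k}\log\log p_i=-\log\beta(n_k)$, so that \eqref{ineq2} is precisely the assertion $\sum_{i=1}^{k}\log\log p_i\ge k\log\log k$, a weakened form of \eqref{ineq4}. Hence \eqref{ineq2} follows from \eqref{ineq4} as soon as the correction term $(\log\log k-3/2)/\log k$ is nonnegative, that is for $k\ge 89$; the remaining cases $44\le k\le 88$ are settled by computing $\beta(n_k)=\prod_{i=1}^{k}(\log p_i)^{-1}$ directly (and one sees in passing that the inequality fails at $k=43$, which is what forces the hypothesis $k\ge 44$).

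For \eqref{ineq3} I would multiply an explicit upper bound for $\vartheta(x)$ by an explicit upper bound for $p_k$. One uses an estimate of the form $\vartheta(x)\le x(1+\varepsilon(x))$ with $\varepsilon(x)$ small (the Rosser--Schoenfeld bound $\vartheta(x)<1.01624\,x$ for all $x$, improved to $\vartheta(x)<x$ on the numerically verified range and to $\vartheta(x)=x+O(x/\log x)$ with an explicit small constant, due to Dusart, for large $x$), together with a bound of the shape $p_k<k(\log k+\log\log k-c)$ with $c$ close to $1$ (valid beyond an explicit threshold; for instance $c=0.9484$ for $k\ge 39017$, and the refinement $p_k<k(\log k+\log\log k-1+\tfrac{\log\log k-2}{\log k})$ for $k$ larger still). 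The product is then $<k(\log k+\log\log k-1/2)$ for all $k$ above an explicit bound --- here the elementary fact that $\log\log k<\tfrac12\log k$ for every $k\ge 2$ is exactly what keeps the $\log\log k$-type correction under $1/2$ --- and the finitely many remaining values $5\le k\le k_0$ are dispatched by evaluating $\log n_k$.

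For \eqref{ineq4} I would bound every prime from below by $p_i\ge i\log i$ ($i\ge1$, Rosser), so that for $i\ge3$ (where $\log i+\log\log i>1$) one has $\log\log p_i\ge\log(\log i+\log\log i)$, keeping $i=1,2$ as the exact values $\log\log2+\log\log3$. Since $t\mapsto\log(\log t+\log\log t)$ is increasing on $[2,\infty)$, the sum dominates the corresponding integral:
$$
\sum_{i=1}^{k}\log\log p_i\ \ge\ \log\log 2+\log\log 3+\int_{2}^{k}\log\!\bigl(\log t+\log\log t\bigr)\,dt.
$$
Writing $\log(\log t+\log\log t)=\log\log t+\log\bigl(1+\tfrac{\log\log t}{\log t}\bigr)$, applying $\log(1+u)\ge u-\tfrac{u^2}{2}$, and using $\int\log\log t\,dt=t\log\log t-\mathrm{li}(t)$ together with a standard explicit upper bound for $\mathrm{li}(k)$ and an elementary lower bound $\int_{2}^{k}\tfrac{\log\log t}{\log t}\,dt\ge\tfrac{k\log\log k}{\log k}-O(k/\log^2 k)$, one arrives at a lower bound of the form $k\bigl(\log\log k+\tfrac{\log\log k-1}{\log k}\bigr)-O(k/\log^2 k)$. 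This exceeds the right-hand side of \eqref{ineq4} by $\tfrac{k(\log k-2)}{2\log^2 k}-O(k/\log^2 k)$, which is positive for $k$ past an explicit and fairly small threshold; the remaining small $k\ge6$ are checked numerically. (One may instead run the whole argument through partial summation on $\sum_{p\le p_k}\log\log p$ using explicit bounds for $\pi(x)$.)

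The conceptual input is modest; the real work --- and the main obstacle --- lies in calibrating the explicit prime estimates so that the clean constants (namely $-1/2$ in \eqref{ineq3}, $-3/2$ in \eqref{ineq4}, and the cutoff $k\ge 44$ in \eqref{ineq2}) come out on the nose, and in arranging matters so that the unavoidable finite verification stays over a manageably short range.
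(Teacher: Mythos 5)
Your proposal is correct in outline and would work, but it takes a genuinely different route from the paper for \eqref{ineq3} and \eqref{ineq4}; only your treatment of \eqref{ineq2} (deduce it from \eqref{ineq4} once $\log\log k\ge 3/2$, i.e.\ $k\ge 89$, and settle $44\le k\le 88$ numerically) matches the paper's one-line remark. For \eqref{ineq3} the paper avoids $\vartheta$ and Dusart-type asymptotics entirely: it proceeds by induction on $k$, with base case $k=5$, using only the elementary bound $p_{j}<\tfrac32\,j\log j$ for $j\ge4$ together with $\log(1+1/k)\ge 1/k-1/(2k^2)$ to show the target right-hand side increases at least as fast as the left. For \eqref{ineq4} the paper again inducts, but only after verifying the statement by computer for all $k\le 200\,000$; the induction step uses the lower bound $p_{j}\ge c\,j\log j$ with $1/c=\max_{x}\pi(x)\log x/x=1.255\dots$ (attained at $x=113$), expands $\log\log p_{k+1}$ via $\log(1+u)\ge u-u^2/2$, and reduces everything to a monotone-in-$k$ inequality checked at $k=200\,000$. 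Your route instead composes the global bounds $\vartheta(x)\le(1+\varepsilon(x))x$ and $p_{k}\le k(\log k+\log\log k-c)$ for \eqref{ineq3}, and for \eqref{ineq4} uses Rosser's $p_i>i\log i$ together with an integral comparison for the increasing function $t\mapsto\log(\log t+\log\log t)$. Both are sound; what the paper's induction buys is that the only calibration needed is at the base case plus a single elementary bound per step, whereas your direct approach transfers all the delicacy into the choice of explicit constants (the Rosser--Schoenfeld/Dusart thresholds and the error in $\mathrm{li}$), which you candidly flag. Two small points of care in your \eqref{ineq4} argument: the dominant error from $-\tfrac12\int(\log\log t/\log t)^2\,dt$ is of size $k(\log\log k)^2/\log^2 k$, slightly larger than the $k/\log^2 k$ you wrote, though still dominated by the main gain $\sim k/(2\log k)$; and routing \eqref{ineq3} through a bound on $p_k$ alone is lossy at small $k$ (e.g.\ $p_5=11>5(\log5+\log\log5-\tfrac12)\approx7.93$ while $\vartheta(p_5)\approx7.75$ is fine), so the finite check must be run on the $\vartheta$-level inequality, not the $p_k$-level one. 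Neither affects the viability of the plan.
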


\begin{proof}
We first prove inequality (\ref{ineq3}) using induction. First observe that the inequality holds for $k=5$. Assuming that the inequality holds for some $k\ge 5$, we will show that it must then hold for $k+1$. Since $p_j< \frac 32 j\log j$ for each $j\ge 4$, it follows that
\begin{eqnarray*}
\sum_{i=1}^{k+1}\log p_i& \le &  k(\log k+\log\log k-\frac{1}{2})+\log p_{k+1}\\
& < & (k+1)(\log(k+1)+\log\log(k+1)-\frac{1}{2})+\frac{1}{2}+\log \frac{3}{2}-k\log(1+\frac{1}{k})\\
& < & (k+1)(\log(k+1)+\log\log(k+1)-\frac{1}{2}),
\end{eqnarray*}
where this last inequality holds because of the fact that
$$
\log \left(1+\frac{1}{k}\right)\ge \frac{1}{k}-\frac{1}{2k^2} \qquad \mbox{for all }k\ge 1
$$
implies that
$$
1/2+\log\frac{3}{2}-k\log(1+1/k)\le 1/2+\log\frac{3}{2}-1+\frac{1}{2k}<0  \qquad \mbox{for }k\ge 6,
$$
thus completing the proof of (\ref{ineq3}).

To prove inequality (\ref{ineq4}), we first verify using a computer that it holds for each $k\in[6,200\,000]$. For $k\ge 200\,001$, we proceed by induction. Since the function $\displaystyle{\frac{\pi(x)\log x}{x}}$ attains its maximum at $x=113$ with the value $1/c:=1.255\dotso$ (see for instance Rosser and Schoenfeld \cite{kn:rosser}), it follows that $p_j\ge  cj\log j$. It follows that
\begin{eqnarray*}
& & \sum_{i=1}^k \log \log p_i\\
& & \quad > k\left(\log\log k+\frac{\log\log k-3/2}{\log k}\right)+\log\log(k+1)+\frac{\log\log(k+1)+\log c}{\log (k+1)}\\
& & \qquad-\frac{1}{2}\left(\frac{\log\log(k+1)+\log c}{\log(k+1)}\right)^2\\
&& =(k+1)\left(\log\log(k+1)+\frac{\log\log(k+1)-3/2}{\log(k+1)}\right)\\
&&\quad+\frac{3/2+\log c }{\log(k+1)}-\frac{1}{2}\left(\frac{\log\log(k+1)+\log c    }{\log(k+1)}\right)^2\\
&& \quad+k\left(\log\log k -\log\log(k+1)+\frac{\log\log k -3/2}{\log k }-\frac{\log\log(k+1)-3/2}{\log(k+1)}\right).
\end{eqnarray*}
It remains to show that the sum of these last three terms is positive. Using the mean value theorem and the fact that $\log\log \xi     >5/2$ for $\xi\ge 200\,000$, we find that it suffices to show that
$$
\frac{3/2+\log c    }{\log(k+1)}\ge \frac{1}{2}\left(\frac{\log\log(k+1)+\log c    }{\log(k+1)}\right)^2+\frac{k}{\xi\log \xi,     }
$$
for some $\xi\in(k,k+1)$. It is therefore enough to show that
\begin{equation} \label{eq:23}
3/2+\log c-1\ge \frac{1}{2}\frac{(\log\log(k+1)+\log c)^2}{\log(k+1)}+\frac{1}{k\log k}.
\end{equation}
Since each of the two terms on the right of (\ref{eq:23}) decreases as a function of $k$ for $k\ge 200\,000$ and since (\ref{eq:23}) is true for $k=200\,000$, it follows that (\ref{eq:23}) holds for all $k\ge 200\,000$, thereby completing the proof of (\ref{ineq4}).

Finally, \eqref{ineq2} follows from \eqref{ineq4} and an easy verification with a computer.
\end{proof}

Let us further introduce the function
\begin{equation} \label{def-t}
t(n):=\frac{\tau(n)^{1/k}}{\log n }\qquad(n\ge2).
\end{equation}

\begin{lemma} \label{lem:3} Let $n\ge 2$ be an integer, $2 \le k=\omega(n)$ and $p$ be a prime number. If $p^\alpha\|n$ with $\alpha \ge 2$, then
\begin{equation}\label{ratio1}
\frac{\lambda(n)}{\lambda(n/p)}\le \left(1+\frac{2}{k\alpha}\right)\left(1-\frac{\log p }{\log n}\right)
\end{equation}
and
\begin{equation}\label{ratio2}
\frac{t(n)}{t(n/p)}\le \left(1+\frac{1}{k\alpha}\right)\left(1-\frac{\log p }{\log n}\right).
\end{equation}
Also, for $\ell \in \{1,2\}$, we have
\begin{equation} \label{ratio3}
\left(1+\frac{\ell}{k\alpha}\right)\left(1-\frac{\log p }{\log n}\right)<1 \Longleftrightarrow p>n^{\frac{\ell}{\alpha k+\ell}}
\end{equation}
and
\begin{equation} \label{ratio4}
\alpha=\max\left(2,\left\lceil\frac{\ell}{k}\left(\frac{\log n}{\log p}-1\right)\right\rceil\right) \Longrightarrow \left(1+\frac{\ell}{k\alpha}\right)\left(1-\frac{\log p}{\log n}\right) < 1.
\end{equation}
\end{lemma}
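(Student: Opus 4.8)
The plan is to reduce all four assertions to elementary algebra; the only analytic ingredient is the elementary inequality $(1+x)^{1/k}\le 1+\tfrac xk$, valid for $x\ge 0$ and any integer $k\ge 1$ (immediate from $(1+\tfrac xk)^k\ge 1+x$ by the binomial theorem).

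First I would record the effect of removing one factor $p$. Since $p^{\alpha}\|n$ with $\alpha\ge2$, we have $p^{\alpha-1}\|n/p$ with $\alpha-1\ge1$, so $\omega(n/p)=\omega(n)=k$, while $\tau(n/p)=\tfrac{\alpha}{\alpha+1}\tau(n)$ and $\log(n/p)=\log n-\log p$. Because $\omega$ is unchanged, the factor $k\log k$ in \eqref{eq:def-lambda} cancels in the quotient $\lambda(n)/\lambda(n/p)$, and similarly for $t$. Setting $r:=\bigl(\tfrac{\alpha}{\alpha+1}\bigr)^{1/k}\in(0,1)$ and $s:=\tau(n/p)^{1/k}$, so that $\tau(n)^{1/k}=s/r$, one gets
$$\frac{\lambda(n)}{\lambda(n/p)}=\frac{\tau(n)^{1/k}-1}{\tau(n/p)^{1/k}-1}\Bigl(1-\frac{\log p}{\log n}\Bigr)=\frac{s/r-1}{s-1}\Bigl(1-\frac{\log p}{\log n}\Bigr),\qquad\frac{t(n)}{t(n/p)}=\frac1r\Bigl(1-\frac{\log p}{\log n}\Bigr).$$
Every quantity here is positive because $\tau(n/p)\ge 2^{k}>1$, the integer $n/p$ still having $k$ distinct prime factors.

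Now \eqref{ratio2} is immediate, since $\tfrac1r=(1+\tfrac1\alpha)^{1/k}\le 1+\tfrac1{k\alpha}$. For \eqref{ratio1} I would use $s\ge 2$, hence $s-1\ge1$, to write
$$\frac{s/r-1}{s-1}=\frac1r\Bigl(1+\frac{1-r}{s-1}\Bigr)\le\frac1r\bigl(1+(1-r)\bigr)=\frac{2-r}{r}=1+2\Bigl(\frac1r-1\Bigr)\le 1+\frac{2}{k\alpha},$$
and then multiply by $1-\tfrac{\log p}{\log n}$. For \eqref{ratio3} I would simply clear denominators: writing $L=\tfrac{\log p}{\log n}$, the inequality $\bigl(1+\tfrac{\ell}{k\alpha}\bigr)(1-L)<1$ is successively equivalent to $1-L<\tfrac{k\alpha}{k\alpha+\ell}$, to $L>\tfrac{\ell}{k\alpha+\ell}$, and to $\log p>\tfrac{\ell}{\alpha k+\ell}\log n$, which is \eqref{ratio3}.

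Finally, for \eqref{ratio4}, by \eqref{ratio3} it suffices to prove $\alpha>x$, where $x:=\tfrac{\ell}{k}\bigl(\tfrac{\log n}{\log p}-1\bigr)$. This is the only spot where the hypothesis $\omega(n)\ge2$ is used essentially, and where strictness matters: since $n$ has a prime factor other than $p$ it is not a power of $p$, so $\log n/\log p$, and hence $x$, is irrational, whence $\lceil x\rceil>x$. Thus if $\lceil x\rceil\ge2$ then $\alpha=\lceil x\rceil>x$, while otherwise $\alpha=2$ and $x\le\lceil x\rceil\le1<2$; in both cases $\alpha>x$. I expect no real obstacle here: the computation of $\tau(n/p)/\tau(n)$ and the cancellation of $k\log k$ are routine, and the only steps requiring a moment's care are the bound $s\ge2$ used in \eqref{ratio1} and the irrationality remark used in \eqref{ratio4}.
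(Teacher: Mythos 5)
Your proof is correct and takes essentially the same route as the paper: both hinge on $\tau(n/p)^{1/k}\ge 2$ (since $\omega(n/p)=k$ because $\alpha\ge 2$) together with the elementary bound on $(1+1/\alpha)^{1/k}-1$ (the paper phrases it via the mean-value estimate $(\alpha+1)^{1/k}-\alpha^{1/k}\le\alpha^{1/k}/(k\alpha)$, you via Bernoulli; these are algebraically equivalent). Your multiplicative decomposition through $r$ and $s$ is a cosmetic reorganization of the paper's additive one. One small merit of your write-up: the paper dismisses \eqref{ratio3} and \eqref{ratio4} as ``an easy computation,'' whereas you supply the details, including the genuinely necessary observation that $\omega(n)\ge 2$ forces $\log n/\log p$ to be irrational, so $\lceil x\rceil>x$ strictly — without this, \eqref{ratio4} could fail when $x$ is an integer $\ge 2$.
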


\begin{proof} We write $n=p^\alpha m$, so that $(p,m)=1$ and therefore,
\begin{eqnarray*}
\frac{\lambda(n)}{\lambda(n/p)}&= & \frac{\tau(n)^{1/k}-1}{\tau(n/p)^{1/k}-1}\frac{\log n/p}{\log n}\\
&= & \left(1+\frac{\tau(n)^{1/k}-\tau(n/p)^{1/k}}{\tau(n/p)^{1/k}-1}\right)\left(1-\frac{\log p}{\log n}\right)\\
& = & \left(1+\frac{\tau(m)^{1/k}}{\tau(n/p)^{1/k}-1}((\alpha+1)^{1/k}-\alpha^{1/k})\right)\left(1-\frac{\log p}{\log n}\right)\\
&\le & \left(1+\frac{\tau(n/p)^{1/k}}{\tau(n/p)^{1/k}-1}\frac{1}{k\alpha}\right)\left(1-\frac{\log p}{\log n}\right),
\end{eqnarray*}
where the last inequality follows from the fact that
$$
(\alpha+1)^{1/k}-\alpha^{1/k} \le \sup_{\xi\in[\alpha,\alpha+1]}\frac{\xi^{1/k}}{k\xi}=\frac{\alpha^{1/k}}{k\alpha}.
$$
Since the function $z\rightarrow\frac{z}{z-1}$ is strictly decreasing for $z>1$, the result then follows from the fact that $\tau(n/p)^{1/k}\ge 2$. The proof of inequality \eqref{ratio2} is similar and the proofs of \eqref{ratio3} and \eqref{ratio4} follow from an easy computation.
\end{proof}

\begin{lemma}\label{lem:upsilon} For any real $z>1$ and integer $n=q_1^{\alpha_1}\cdots q_k^{\alpha_k}\ge 2$, let
\begin{equation}\label{def:upsilon}
\upsilon(n,z):=\log k \left(1+\frac{\log \gamma(n)}{\log z   }\right)\beta(n)^{1/k}-\frac{k\log k }{\log z   }.
\end{equation}
Then,
\begin{equation} \label{eq:101}
\frac {d}{dz} \upsilon(n,z) \le 0 \qquad (n\ge 2)
\end{equation}
with strict inequality if $\omega(n) \ge 2$. Also,
\begin{equation} \label{eq:102}
\upsilon(n_k,n_k)<1 \qquad (k\ge 95).
\end{equation}
\end{lemma}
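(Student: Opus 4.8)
My plan is to prove the two assertions of Lemma~\ref{lem:upsilon} separately: \eqref{eq:101} reduces to a single differentiation followed by the arithmetic--geometric mean inequality, while \eqref{eq:102} splits into a finite computation and an analytic tail bound fed by Lemma~\ref{lem:2}.

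For \eqref{eq:101}, I fix $n\ge2$, set $k=\omega(n)$, and observe that in \eqref{def:upsilon} only the factor $1/\log z$ depends on $z$; differentiating gives
\[
\frac{d}{dz}\,\upsilon(n,z)=\frac{\log k\,\bigl(k-\beta(n)^{1/k}\log\gamma(n)\bigr)}{z(\log z)^2},
\]
which vanishes when $k=1$, so I may assume $k\ge2$. For $z>1$ the denominator is positive and $\log k>0$, so the whole matter reduces to showing $\beta(n)^{1/k}\log\gamma(n)\ge k$, with strict inequality when $k\ge2$. Since $\beta(n)^{1/k}\log\gamma(n)=\bigl(\prod_{p\mid n}\log p\bigr)^{-1/k}\sum_{p\mid n}\log p$, this is exactly the arithmetic--geometric mean inequality for the positive numbers $\{\log p:p\mid n\}$; it is strict for $k\ge2$ because distinct primes have distinct logarithms.

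For \eqref{eq:102}, since $\gamma(n_k)=n_k$ we have
\[
\upsilon(n_k,n_k)=2\log k\cdot\beta(n_k)^{1/k}-\frac{k\log k}{\log n_k}.
\]
I would verify $\upsilon(n_k,n_k)<1$ directly for $95\le k\le K$ with $K=2\cdot10^5$ (the range over which \eqref{ineq4} is already checked in Lemma~\ref{lem:2}), using the running sums $\beta(n_k)^{1/k}=\exp\bigl(-\tfrac1k\sum_{i\le k}\log\log p_i\bigr)$ and $\log n_k=\sum_{i\le k}\log p_i$. For $k\ge K+1$ I would set $L=\log k$, $M=\log L$ and $a=M-3/2$; then $L>\log(2\cdot10^5)>e^{5/2}$ forces $M>5/2$, hence $a>1$, and also $L=e^{\,a+3/2}$. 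Inserting $\beta(n_k)^{1/k}\le\frac1L e^{-a/L}$ from \eqref{ineq4} and $\log n_k\le k\bigl(L+M-1/2\bigr)$ from \eqref{ineq3} yields
\[
\upsilon(n_k,n_k)\le 2e^{-a/L}-\frac{L}{L+M-1/2}.
\]
Using $e^{-t}\le1-t+t^2/2$ for $t\ge0$ gives $2e^{-a/L}\le1+\bigl(1-a/L\bigr)^2$, so it suffices to prove $\bigl(1-a/L\bigr)^2\bigl(L+M-1/2\bigr)<L$. With $M=a+3/2$ this becomes $(L-a)^2(L+a+1)<L^3$, i.e.\ $(a-1)L^2+a(a+2)L>a^2(a+1)$, which holds since $(a-1)L^2>0$ and $L=e^{a+3/2}\ge a+5/2>\frac{a(a+1)}{a+2}$ gives $a(a+2)L>a^2(a+1)$.

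The hard part is \eqref{eq:102}. The two pieces of $\upsilon(n_k,n_k)$ tend to $2$ and to $1$ at essentially the same rate, so the loss inherent in \eqref{ineq3} and \eqref{ineq4} overwhelms the margin near $k=95$; this is exactly why the finite range must be handled by direct computation instead of estimation. For the tail the subtle points are to take $K$ large enough that $M>5/2$ (so that the term $(a-1)L^2$ enters with the right sign) and to linearize $e^{-a/L}$ sharply: the bound $e^{-t}\le1-t+t^2/2$ is chosen so that the whole estimate collapses to the clean polynomial inequality above, and one should confirm that this polynomial inequality holds for all $L\ge\log(2\cdot10^5)$, which is immediate once $a>1$ and $L\ge a+5/2$ are available.
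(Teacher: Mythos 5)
Your argument for \eqref{eq:101} is the same as the paper's: differentiate with respect to $z$, observe that only $1/\log z$ depends on $z$, reduce the sign of the derivative to the inequality $k\le\beta(n)^{1/k}\log\gamma(n)$, and invoke the arithmetic--geometric mean inequality on $\{\log p:p\mid n\}$, which is strict for $k\ge 2$.

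For \eqref{eq:102} the overall structure (computer check of a finite initial segment plus an analytic tail resting on \eqref{ineq3} and \eqref{ineq4}) is also the paper's, but the tail estimate takes a modestly different route. The paper rewrites the target as $2<\bigl(\tfrac{k}{\log n_k}+\tfrac{1}{\log k}\bigr)\beta(n_k)^{-1/k}$, applies \eqref{ineq3}, \eqref{ineq4} and the \emph{first}-order bound $e^x>1+x$, and multiplies out to reach $2+\xi_k$ with $\xi_k=\tfrac{(a-1)L+a(a+1)}{L(L+a+1)}$, where $L=\log k$, $a=\log\log k-3/2$. Because this last expression can be positive even when $a<1$, the paper's analytic tail already takes over near $k\approx 3.6\times10^{4}$, so its computer check runs only through $k=35806$. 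You instead apply the sharper second-order bound $e^{-t}\le 1-t+t^2/2$ directly to $\upsilon(n_k,n_k)\le 2e^{-a/L}-\tfrac{L}{L+a+1}$, reduce to the polynomial inequality $(L-a)^2(L+a+1)<L^3$, and then close it by splitting off $(a-1)L^2>0$; that last step forces $a>1$, i.e.\ $\log\log k>5/2$, which pushes your analytic threshold up to $k\approx 2\times10^{5}$ and a correspondingly larger (but still perfectly feasible) finite verification. Your bound on the exponential is actually tighter, so your polynomial inequality is weaker than the paper's, and you discard that advantage by insisting both split terms be nonnegative; keeping the quadratic inequality in one piece would recover (and in fact improve on) the paper's threshold. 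Both routes are correct.
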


\begin{proof}
To prove (\ref{eq:101}), we first observe that
$$
\frac {d}{dz} \upsilon(n,z) = -\log k \frac{\log \gamma(n)}{z\log^2 z}\beta(n)^{1/k}+\frac{k\log k }{z\log^2 z},
$$
which implies that (\ref{eq:101}) is equivalent to
$$
k\le \beta(n)^{1/k} \log \gamma(n),
$$
which itself is an immediate consequence of the arithmetic geometric mean inequality.

To prove (\ref{eq:102}),
we must show that
\begin{equation}\label{eq:toshow}
2<\left(\frac{k}{\log n_k}+\frac{1}{\log k }\right)\beta(n_k)^{-1/k}.
\end{equation}
Using inequalities \eqref{ineq4} and then \eqref{ineq3} we see that the right hand side of \eqref{eq:toshow} is

\begin{eqnarray*}
&& >\left(\exp\left(\log\log k +\frac{\log\log k -3/2}{\log k }\right)\right)\cdot \left(\frac{1}{\log k }+\frac{1}{\log k +\log\log k -1/2}\right)\\
&& =\left(\exp\left(\frac{\log\log k -3/2}{\log k }\right)\right) \cdot \left(1+\frac{\log k }{\log k +\log\log k -1/2}\right)\\
&& >\left(1+\frac{\log\log k -3/2}{\log k }\right) \cdot \left(1+\frac{\log k }{\log k +\log\log k -1/2}\right)\\
&& =2+\frac{\log\log k -3/2}{\log k }-\frac{1}{\log k +\log\log k -1/2} = 2+\xi_k,
\end{eqnarray*}
say. Since $\xi_k>0$ for all $n\ge 35\,807$, inequality (\ref{eq:toshow}) is proved for $k\ge 35\,807$. On the other hand, using a computer, one can easily check that (\ref{eq:102}) holds for each integer $k\in [95,35806]$, thus completing the proof of (\ref{eq:102}).
\end{proof}

\begin{lemma} \label{lem:5}
Let $\alpha\in (0,1)$, $c_1,c_2\in \R$ with $c_1>0$, $c_2 > 0$ and $I:=(c^{-1/\alpha}_1+c_2,\infty)$. Consider the function $g:I \to \R$ defined by
$$
g(z):=\frac{c_1(z-c_2)^\alpha-1}{z}.
$$
Then, $g$ attains its unique maximum at some point $z_0>c^{-1/\alpha}_1+c_2$.
\end{lemma}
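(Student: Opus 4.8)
The plan is to show that $g$ is \emph{unimodal} on $I$: strictly increasing up to a single interior critical point $z_0$ and strictly decreasing afterwards, so that $z_0$ is the unique maximizer and automatically satisfies $z_0>c_1^{-1/\alpha}+c_2$. First I would record that $g>0$ on $I$: when $z>c_1^{-1/\alpha}+c_2$ we have $(z-c_2)^\alpha>c_1^{-1}$, hence $c_1(z-c_2)^\alpha-1>0$ and $g(z)>0$. Writing $u:=z-c_2>c_1^{-1/\alpha}$, a direct differentiation gives
$$
g'(z)=\frac{\alpha c_1 u^{\alpha-1}z-c_1u^{\alpha}+1}{z^{2}}=:\frac{N(z)}{z^{2}},
$$
so the sign of $g'$ equals the sign of $N(z)$. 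Substituting $z=u+c_2$ and collecting terms, one simplifies
$$
N(z)=c_1(\alpha-1)u^{\alpha}+\alpha c_1 c_2\,u^{\alpha-1}+1 .
$$

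The key step is to prove that $N$ is strictly decreasing on $I$. Differentiating with respect to $u$ (equivalently, with respect to $z$, since $du/dz=1$) yields
$$
\frac{dN}{du}=c_1\alpha(\alpha-1)u^{\alpha-1}+\alpha(\alpha-1)c_1c_2\,u^{\alpha-2}
=c_1\alpha(\alpha-1)u^{\alpha-2}(u+c_2)=c_1\alpha(\alpha-1)u^{\alpha-2}z ,
$$
which is strictly negative because $\alpha\in(0,1)$ forces $\alpha(\alpha-1)<0$ while $c_1,u^{\alpha-2},z>0$. This is precisely the place where the hypothesis $\alpha<1$ is used, and it is the step I expect to be the main (though mild) obstacle, together with the bookkeeping that produces the clean form of $N$.

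It then remains to locate the sign change of $N$. As $z\to(c_1^{-1/\alpha}+c_2)^{+}$ we have $u\to c_1^{-1/\alpha}$, so $u^{\alpha}\to c_1^{-1}$ and $u^{\alpha-1}\to c_1^{1/\alpha-1}$, whence
$$
N\longrightarrow(\alpha-1)+\alpha c_2 c_1^{1/\alpha}+1=\alpha\bigl(1+c_2 c_1^{1/\alpha}\bigr)>0 .
$$
On the other hand, since $0<\alpha<1$, the term $c_1(\alpha-1)u^{\alpha}\to-\infty$ dominates $\alpha c_1c_2u^{\alpha-1}\to 0$ and the constant $1$, so $N(z)\to-\infty$ as $z\to\infty$. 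Being continuous and strictly decreasing, $N$ has a unique zero $z_0\in I$, with $N>0$ on $(c_1^{-1/\alpha}+c_2,z_0)$ and $N<0$ on $(z_0,\infty)$; consequently $g$ strictly increases on the former subinterval and strictly decreases on the latter, so $z_0$ is the unique maximizer and $z_0>c_1^{-1/\alpha}+c_2$.

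As a sanity check one may additionally note that $g$ extends continuously by the value $0$ to the left endpoint $c_1^{-1/\alpha}+c_2$ and that $g(z)=o(1)$ as $z\to\infty$ (again because $\alpha<1$), which independently confirms that the maximum is attained in the interior of $I$; but this is not needed once the monotonicity of $N$ is established.
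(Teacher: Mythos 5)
Your argument is correct and follows essentially the same strategy as the paper: replace $g'$ by a positive multiple (you use $N(z)=z^2g'(z)$, the paper uses $h(z)=z^2(z-c_2)^{1-\alpha}g'(z)$), show that multiple is strictly decreasing on $I$, and check the sign at the two ends of $I$. The minor difference in normalization actually works slightly in your favor: your $dN/du$ factors as $c_1\alpha(\alpha-1)u^{\alpha-2}z$ and is manifestly negative, whereas the paper's $h'(z)=c_1(\alpha-1)+\tfrac{1-\alpha}{(z-c_2)^\alpha}$ requires the extra observation that $h'$ cannot vanish on $I$ (because $c_1(z-c_2)^\alpha>1$ there) combined with $h'(\infty)<0$.
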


\begin{proof}
Consider the function $h:I\rightarrow\R$ given by
\begin{equation}\label{def:h}
h(z):=z^2(z-c_2)^{1-\alpha}g^\prime(z)=c_1\alpha z-c_1(z-c_2)+(z-c_2)^{1-\alpha}.
\end{equation}
It follows from this that $h$ and $g^\prime$ have the same sign and the same zeros in $I$. Moreover, $h(\infty)=-\infty$. On the other hand,
$$
h'(z)=c_1(\alpha-1)+\frac{1-\alpha}{(z-c_2)^\alpha},
$$
in which case,
$$
h'(z)=0\quad \Longleftrightarrow \quad 1=c_1(z-c_2)^\alpha,
$$
which is impossible for $z\in I$. Now, because $h'(\infty)<0$, this means that $h'(z)<0$ for $z\in I$. Our second claim then follows from the fact that the maximum is in $I$.
\end{proof}

\section{Proof of Theorem \ref{thm:1}}

It is easy to verify that \eqref{inequality1} holds when $\omega(n)=1$. For any $n$ with $\omega(n) \ge 2$, we introduce the function $r(n)$ defined implicitly by
$$
\tau(n)=\left(\frac{e^{r(n)}\log n}{\omega(n)\log \omega(n)}\right)^{\omega(n)}.
$$
Hence, for any $n$ with $\omega(n) \ge 2$, we have
\begin{equation}\label{r}
r(n):=\frac{1}{\omega(n)}\left(\log \tau(n)-\omega(n)\log\left(\frac{\log n  }{\omega(n)\log \omega(n)}\right)\right).
\end{equation}
Observe that for $n_{*}:=60060=2^2 \cdot 3 \cdot 5 \cdot 7 \cdot 11 \cdot 13$ we have $r(n_{*})=0.737505\dotso=\log \eta_2$. We claim that $n_{*}$ is the only integer $n$ with $\omega(n) \ge 2$ that maximizes the function $r$ (this function is bounded, as it will become clear below). To prove it, we proceed by contradiction. Assume that, for some $k \ge 2$, there exists an integer $n' \neq n_{*}$ with $\omega(n')=k$ for which $\eqref{inequality1}$ is false and moreover that $r(n')$ is maximal. It is clear that the factorization of $n'$ takes the form
\begin{equation}\label{shape}
n'=\prod_{i=1}^{k}p_i^{\alpha_i} \qquad \mbox{with } \alpha_1 \ge \alpha_2 \ge \cdots \ge \alpha_k,
\end{equation}
where the $p_i$'s are the primes in ascending order.

Using \eqref{fond2} (from Corollary \ref{cor:1}) and \eqref{ineq2} (from Lemma \ref{lem:2}), one easily see that $r(n') < \log 2=0.693\dotso$ if $k \ge 44$ which is non sense since $r(n_{*})=0.737\dotso$ Thus we must have $k \le 43$. Now, it follows from (\ref{fond2}) that
\begin{equation}\label{i1}
\tau(n') \le \left( \frac{2\log n'}k \right)^k  \beta(n')  \le  \left( \frac{2\log n'}k \right)^k  \beta(n_k).
\end{equation}
Inserting \eqref{i1} in \eqref{r}, we then get
\begin{eqnarray*}
r(n') & \le & \frac{1}{k}\left(\log \beta(n_k)+k\log\left(\frac{2\log n'}{k}\right)-k\log\left(\frac{\log n'}{k\log k}\right)\right)\\
& = & \log 2+\log\log k+\frac{\log \beta(n_k)}{k},
\end{eqnarray*}
a quantity which depends only on $k$. On the other hand, using a computer reveals that $r(n')<\log \eta_2$ for each $k \in [2,3] \cup [25,43]$. This contradicts the choice of $n'$. Therefore we only need to consider the cases when $k \in \{4,\dotso,24\}$.

Now, inserting \eqref{fond1} in \eqref{r}, we have that
\begin{eqnarray*}
r(n') & \le & \frac{1}{k}\left(\log \beta(n_k)+k\log\left(\frac{\log n'}{k}\right)+k\log\left(1+\frac{\log n_k}{\log n'}\right)-k\log\left(\frac{\log n'  }{k\log k}\right)\right)\\
& = &\frac{\log \beta(n_k)}{k}+\log\log k+\log\left(1+\frac{\log n_k}{\log n'}\right)=r_1(n',k),
\end{eqnarray*}
where
\begin{equation}\label{def:r1}
r_1(z,k):=\frac{\log \beta(n_k)}{k}+\log\log k+\log\left(1+\frac{\log n_k}{\log z}\right).
\end{equation}
We observe that the function $r_1(z,k)$ decreases when $z$ increases. Thus, defining $z_k$ as the unique solution in $z$ of $r_1(z,k)=\log \eta_2$, we obtain that $n' \le z_k$ given that $\omega(n')=k$.

We now consider the function
\begin{equation}\label{def:u}
u(x):=\max_{\ell \ge 0}\{\ell:\ n_\ell \le x\}.
\end{equation}
Observe that, since $n'$ is of the form \eqref{shape}, $u(z_k/n_k)$ is an upper bound for the rank $j$ of the largest prime $p_j$ such that $p_j^2 \mid n'$. One may verify that for each $k \in \{4,\dotso,24\}$ we have $u(z_k/n_k) \le 3$ implying that $j \le 3$. Now, recalling the definition of $t(n)$ given in \eqref{def-t}, we may write
$$
r(n)=\log t(n)+\log(\omega(n)\log \omega(n)).
$$
Hence, for a fixed value of $k=\omega(n)$, it follows that $r(n)$ increases or decreases along with $t(n)$. Therefore, our hypothesis implies that $t(n')$ is maximal. Thus for each $j \in \{1,2,3\}$, using inequality \eqref{ratio2} and the maximality of $t(n')$, we can write
$$
1\le\frac{t(n')}{t(n'/p_j)} \le \left(1+\frac{1}{k\alpha}\right)\left(1-\frac{\log p_j }{\log n'}\right) \le \left(1+\frac{1}{k\alpha}\right)\left(1-\frac{\log p_j }{\log z_k}\right),
$$
and we obtain the desired contradiction if this last expression is less than $1$, which will happen if the integer $\alpha \ge 2$ satisfying $p_j^\alpha \| n'$ is large enough. Using \eqref{ratio4} we get an upper bound for each of the first three components in the exponent vector of $n'$. In fact, one may verify that, for each $k \in \{4,\dotso,24\}$,
$$
(4,2,2,\underbrace{1,\dotso,1}_{k-3})
$$
is an upper bound (in each of its coordinates) for the exponent vector of $n'$, implying that there are just a small number of cases to verify. After all the computations are done, we obtain a finite set of pairs $(n,r(n))$ including $(n_{*},r(n_{*}))$ and find that all the other pairs in this set satisfy $r(n) < r(n_{*})$. This contradicts the existence of $n'$ and completes the proof of Theorem \ref{thm:1}.

\section{Proof of Theorem \ref{thm:2}}

We first verify that \eqref{inequality2} does not hold for the integer
$$
n_{*}:=782139803452561073520=24n_{16}.
$$
If $n$ is any integer such that $\omega(n) \ge 44$, then it follows from Corollary \ref{cor:1} and Lemma \ref{lem:2} that inequality \eqref{inequality2} is satisfied (see the proof of Theorem \ref{thm:1}). Since it is clear that \eqref{inequality2} holds when $\omega(n)=1$, it remains only to consider the set of integers $n$ such that $2 \le \omega(n) \le 43$. For any such $k$, let $z_k$ be the unique solution in $z$ to
$$
r_1(z,k)=\log 2,
$$
where $r_1(z,k)$ is the function defined in \eqref{def:r1}.

We proceed by contradiction by assuming that there exists an integer $n'$ such that $\omega(n') \in \{17,\dotso,43\}$ and for which \eqref{inequality2} is false. We may also assume that $n'$ realizes the maximum of the function $r$ and moreover that $n'$ is of the form \eqref{shape}. As in Theorem \ref{thm:1}, we have $n' \le z_k$ and one can verify that $u(z_k/n_k) \le 5$. Thus, the exact same method that we used in the proof of Theorem \ref{thm:1} leads to an upper bound for the exponent vector of $n'$ given by
$$
(5,3,2,2,\underbrace{1,\dotso,1}_{k-4}).
$$
One can then verify, using a computer, that neither of these finite number of possibilities leads to a number $n$ that does not satisfy \eqref{inequality2}, thus contradicting the existence of $n'$.

We can therefore assume that $2 \le k \le 16$. Since $z_2=3.25\dotso$, $z_3=36.12\dotso$ and $r(30) < \log 2$, we deduce that in the particular cases $k=2$ and $k=3$, there is no counterexample in integers $n$ of the form \eqref{shape} to inequality \eqref{inequality2}. Thus, there is no counterexample in integers $n \ge 2$ with $\omega(n) \le 3$. For $4 \le k \le 16$ there are counterexamples to \eqref{inequality2} and thus we need to focus our attention on getting a good upper bound for every such integer in terms of $k$ only. In order to do this, we first exhibit the values of $u_k:=u(z_k/n_k)$ (easily obtained using a computer) in Table $1$.
\begin{center}
\begin{tabular}{|c||c|c|c|c|c|c|c|c|c|c|c|c|c|}\hline
$k$ & 4 & 5 & 6 & 7 & 8 & 9 & 10 & 11 & 12 & 13 & 14 & 15 & 16 \\ \hline
$u_k$ & 1 & 2 & 3 & 3 & 3 & 4 & 4 & 4 & 4 & 4 & 5 & 5 & 5 \\ \hline
\end{tabular}
\vskip 5pt
{\sc Table 1}
\end{center}
We can use this information to obtain an upper bound for $\tau(n)$ for any such counterexample $n$ of \eqref{inequality2}. Indeed, by using the multiplicativity of the function $\tau$ and inequality \eqref{fond1}, we get that for any such $n$ with $\omega(n)=k$,
$$
\tau(n) \le d_k:=\frac{2^{k-u_k}\beta(n_{u_k})}{u_k^{u_k}}\left(\log\frac{z_kn^2_{u_k}}{n_k}\right)^{u_k}.
$$
A priori this inequality is valid only for integers $n$ of the form \eqref{shape}, but it is then clearly also true for any counterexample to \eqref{inequality2} since any general counterexample to \eqref{inequality2} has an associated counterexample of the type \eqref{shape} with the same exponent vector once the prime factors are properly ordered. We use this inequality in \eqref{r} and introduce the function
$$
r_2(z,k):=\frac{1}{k}\left(\log d_k-k\log\left(\frac{\log z}{k\log k}\right)\right).
$$
Now, let $z^\prime_k$ be the unique solution in $z$ to
$$
r_2(z,k)=\log 2.
$$
Since $\frac{d}{dz}r_2(z,k)<0$, we deduce that $z^\prime_k$ is an upper bound for the largest possible counterexample $n$ to \eqref{inequality2} with an hypothetic value of $\tau(n)$ equal to $d_k$; clearly this is the largest among those we find with any smaller value of $\tau(n)$. We then find, using a computer, that $z_k^\prime$ is smaller than $24n_{16}$  for each $k \in \{4,\dotso,15\}$.

For $k=16$, the situation is somewhat different. Instead, we verify by using $z^\prime_{16}$ that there are only three possible exponent vectors, namely
\begin{eqnarray}\nonumber
&(3,1,1,1,1,1,1,1,1,1,1,1,1,1,1,1)&\\ \label{pos}
&(3,2,1,1,1,1,1,1,1,1,1,1,1,1,1,1)&\\ \nonumber
&(4,2,1,1,1,1,1,1,1,1,1,1,1,1,1,1)& \nonumber
\end{eqnarray}
that yield a counterexample to \eqref{inequality2} in integers $n$ of the type \eqref{shape}. For each of these, the smallest number strictly larger than the basic form is obtained by replacing the largest prime factor $p_{16}=53$ by $59$. We then obtain numbers $n$ which give $r(n)<\log 2$. We deduce that $24n_{16}$ (which corresponds to the last exponent vector in \eqref{pos}) is the largest of these. The proof of Theorem \ref{thm:2} is then complete.

\section{Proof of Theorem \ref{thm:3}}

We first verify that for $n_{*}:=720n_7$ we have $\lambda(n_{*})=1.1999953\dotso:=\eta_3$. We will show that $n_{*}$ is the only integer that maximizes $\lambda$. In order to reach a contradiction, we will assume that there exists $n' \neq n_{*}$ for which $\lambda(n') \ge \lambda(n_{*})$. Again, it is clear that the maximal value of $\lambda$ exists and is attained by an integer of the form \eqref{shape}. Therefore we will assume that $n'$ is of this form with $\omega(n')=k$. From \eqref{ineq5} and \eqref{ineq2}, it follows that the inequality
$$
\lambda(n') \le 1+\frac{\sum_{i=1}^{k}\log p_i-k\log k}{\log n'}
$$
is valid for each $k\ge 44$. On the other hand, we cannot have
\begin{equation}\label{n-sup}
\frac{\sum_{i=1}^{k}\log p_i-k\log k }{\log n'  }>\eta_3-1
\end{equation}
if $k \ge 44$, the reason being that since $n'$ has $k$ prime factors, it must satisfy $\log n' \ge \log n_k = \sum_{i=1}^{k}\log p_i$ in which case \eqref{n-sup} would imply
\begin{equation}\label{cont}
(2-\eta_3) \sum_{i=1}^{k}\log p_i>k\log k.
\end{equation}
But, using \eqref{ineq3}, it is easy to verify that \eqref{cont} is impossible when $k\ge 44$. This proves that we must have $k \le 43$. Considering \eqref{ineq5}, we let $z_k$ be the unique solution in $z$ of
$$
\upsilon(n_k,z)=\biggl(\prod_{p \mid n_k}\frac{\log k}{\log p}\biggr)^{1/k}+\frac{\log n_k}{\log z}\biggl(\prod_{p \mid n_k}\frac{\log k}{\log p}\biggr)^{1/k}-\frac{k\log k}{\log z}=\eta_3,
$$
where $\upsilon(n,z)$ is the function defined in \eqref{def:upsilon}. Since $\frac {d}{dz}\upsilon(n_k,z)<0$ by \eqref{eq:101}, we deduce that $n' \le z_k$. We find that the only possibilities for $n'$ are those with $k \in \{5,\dotso,13\}$, since otherwise we would have $n' \leq z_k < n_k$ which is impossible since by hypothesis we have $n_k \mid n'$.

Now, for $5\le k\le 13$ and from the fact that $n'$ is of the form \eqref{shape} with $\omega(n')=k$, we deduce that $n' = sn_k \le  z_k$ for some integer $s$ which satisfies $n_j|s$ with $j \le  k$. One can calculate that the largest ratio $z_k/n_k$ (for $5 \le k \le 13$) is less than $264\,507$. This forces $j\le 6$. Now, consider the set
$$
U:=\{s \le 264\,507:\ P(s) \le 13\},
$$
where $P(s)$ stands for the largest prime factor of $s$, and the set $V:=\{(sn_k,\lambda(sn_k)):\ s\in U\ \mbox{and}\ 5 \le k \le 13\}$. We observe that $V$ contains the element $(n_{*},r(n_{*}))$ and that for any other $n$ we have $r(n) < r(n_{*})$. This contradicts the existence of $n'$ and the proof of Theorem \ref{thm:3} is then complete.

\section{Proof of Theorem \ref{thm:4}}

In order to reach a contradiction, let us assume that there exists an integer $n'$ with $\omega(n')=k$, for some $k \ge 74$, for which \eqref{result} does not hold. For fixed values of $\omega(n)$ and $\tau(n)$, we see by definition \eqref{eq:def-lambda} that the function $\lambda(n)$ decreases as $n$ increases. For this reason, we will assume that $n'$ is of the form \eqref{shape}. We will also assume that $\lambda(n')$ is maximal.

For $k\ge 95$, we deduce from \eqref{ineq5}, \eqref{def:upsilon}, \eqref{eq:101} and \eqref{eq:102} that
$$
\lambda(n') \le \upsilon(n',n') \le \upsilon(n_k,n_k) < 1.
$$
This means that, inequality \eqref{result} holds for $k \ge 95$.

For each integer $k \in \{74,\dotso,94\}$, we cannot conclude since $\upsilon(n_k,n_k)>1$. However, since by Lemma \ref{lem:upsilon} we have $\frac {d}{dz}\upsilon(n_k,z) \le 0$ and $\upsilon(n_k,\infty)<1$, we can define $z_k$ implicitly by $\upsilon(n_k,z_k)=1$, in which case $n' \le z_k$. Also, observe that
$$
\log z_k/\log n_k<2
$$
for each $k$. This last inequality implies that the largest prime factor of $n'$ has its corresponding exponent equal to $1$.

As we have already seen, $u_k := u(z_k/n_k)$ provides an upper bound for the rank $j$ of the largest prime $p_j$ such that $p_j^2\mid n'$ since $n'$ is of the form \eqref{shape}. Our goal from now on is to verify all the remaining possibilities. To do so, we proceed in four steps. In the first step, we introduce a variable $j_1$ that will take the values $0,1,\dots,u_k$ and a variable $j_2$ that will take the values $0,1,\dots,\min(j_1,u_k)$. Then, we assume that
\begin{equation}\label{shape1}
n'=p_1^{\alpha_1}\cdots p_{j_2}^{\alpha_{j_2}}\cdot p_{j_2+1}^2\cdots p_{j_1}^2\cdot p_{j_1+1}\cdots p_k
\end{equation}
for some integers $\alpha_i \ge 3$ and that $n'$ is of the form \eqref{shape}. Now, if $0 < j_2 \le j_1$, by using the multiplicativity of the function $\tau$ together with \eqref{ineq5}, recalling the definition of $\lambda$ in \eqref{eq:def-lambda}, we are lead to consider the function
$$
f_1(j_2,j_1,k,z):=\frac{(c_1(j_2,j_1,k)(\log z - c_2(j_2,j_1,k))^{j_2/k}-1)k\log k }{\log z   }
$$
where
$$
c_1=c_1(j_2,j_1,k):=2^{(k-j_1)/k}3^{(j_1-j_2)/k}\frac{1}{j_2^{j_2/k}}\beta(n_{j_2})^{1/k}
$$
and
$$
c_2=c_2(j_2,j_1,k):=\log(n_kn_{j_1}/(n_{j_2})^3).
$$
Assume for now that each constant $c_2$ that will be considered through this proof satisfies
\begin{equation} \label{cond:c2}
c_2 > 0.
\end{equation}
Then, using Lemma \ref{lem:5}, we have
$$
\lambda(n') \le f_1(j_2,j_1,k,n') \le \max_{z > c_2(j_2,j_1,k)}f_1(j_2,j_1,k,z).
$$
Therefore, we will get the desired contradiction if $n'$ is of the type \eqref{shape1} and the unique maximum of $f_1(j_2,j_1,k,z)$ is proven to be less than $1$ (see Remark \ref{rem:3} for more details). The cases with $j_2=0$ or $j_1=0$ must be verified directly.

The values of $u_k$ are recorded in Table $2$.
\begin{center}
\begin{tabular}{|c||c|c|c|c|c|c|c|c|c|c|c|}\hline
$k$ & 74 & 75 & 76 & 77 & 78 & 79 & 80 & 81 & 82 & 83 & 84 \\ \hline
$u_k$ & 45 & 43 & 41 & 39 & 37 & 35 & 33 & 30 & 29 & 26 & 25 \\ \hline
\end{tabular}
\begin{tabular}{|c||c|c|c|c|c|c|c|c|c|c|}\hline
$k$ & 85 & 86 & 87 & 88 & 89 & 90 & 91 & 92 & 93 & 94 \\ \hline
$u_k$ & 23 & 21 & 19 & 17 & 15 & 13 & 11 & 8 & 6 & 2 \\ \hline
\end{tabular}
\vskip 5pt
{\sc Table 2}
\end{center}
All the computations being done, one is left with a reduced set of possibilities for the form of $n'$. In fact, we now have that $k \in \{74,75,76,77\}$ and also that the number of values that $j_1$ can take is significantly reduced. The final result is given in Table $3$.
\begin{center}
\begin{tabular}{|c||c|c|c|c|}\hline
$k$ & 74 & 75 & 76 & 77\\ \hline
$j_1 \in$ & \{14,\dotso,28\} & \{16,\dotso,26\} & \{18,\dotso,25\} & \{20,\dotso,23\} \\ \hline
\end{tabular}
\vskip 5pt
{\sc Table 3}
\end{center}
What we mean here is that a fixed pair $(k,j_1)$ is not in Table $3$ if for all $j_2 \le \min(j_1,u_k)$ we have $\displaystyle\max_{z > c_2(j_2,j_1,k)}f_1(j_2,j_1,k,z) < 1$.

This is where the second step of verifications starts. We now assume that
$$
n'=p_1^{\alpha_1}\cdots p_{j_3}^{\alpha_{j_3}}\cdot p_{j_3+1}^3\cdots p_{j_2}^3\cdot p_{j_2+1}^2\cdots p_{j_1}^2\cdot p_{j_1+1}\cdots p_k
$$
for some integers $\alpha_i \ge 4$ and we use the same argument as before to define the well suited function
$$
f_2(j_3,j_2,j_1,k,z):=\frac{(c_1(j_3,j_2,j_1,k)(\log z-c_2(j_3,j_2,j_1,k))^{j_3/k}-1)k\log k }{\log z},
$$
where
$$
c_1(j_3,j_2,j_1,k):=2^{(k-j_1)/k}3^{(j_1-j_2)/k}4^{(j_2-j_3)/k}\frac{1}{j_3^{j_3/k}}\beta(n_{j_3})^{1/k}
$$
and
$$
c_2(j_3,j_2,j_1,k):=\log(n_kn_{j_1}n_{j_2}/(n_{j_3})^4).
$$
We still have the chain of inequalities
$$
\lambda(n') \le f_2(j_3,j_2,j_1,k,n') \le \max_{z > c_2(j_3,j_2,j_1,k)}f_2(j_3,j_2,j_1,k,z).
$$
This time, we run this over the remaining values of $j_1$, and for
$$
j_2 \in \{1,\dotso,\min(j_1,u(z_k/(n_{j_1}n_k)))\}
$$
and
$$
j_3 \in \{1,\dotso,\min(j_2,u(z_k/(n_{j_2}n_{j_1}n_k)))\}.
$$
The cases with $j_3=0$ must be treated separately. Once again, these computations lead to further progress. We record in Table $4$ the remaining values which need to be examined.
\begin{center}
\begin{tabular}{|c||c|c|c|}\hline
$k$ & 74 & 75 & 76 \\ \hline
$j_1 \in$ & \{14,\dotso,23\} & \{16,\dotso,21\} & \{18,19\} \\ \hline
\end{tabular}
\vskip 5pt
{\sc Table 4}
\end{center}
We are now ready to begin the third step of verifications. We assume that
$$
n'=p_1^{\alpha_1}\cdots p_{j_4}^{\alpha_{j_4}}\cdot p_{j_4+1}^4\cdots p_{j_3}^4\cdot p_{j_3+1}^3\cdots p_{j_2}^3\cdot p_{j_2+1}^2\cdots p_{j_1}^2\cdot p_{j_1+1}\cdots p_k
$$
for some integers $\alpha_i \ge 5$ and define the function $f_3(j_4,j_3,j_2,j_1,k,z)$ in a similar manner by using the same ideas. However, we do introduce a new idea in the way of reducing the number of values that the variables $j_s$ $(s \ge 3)$ can take. We first assume that $p^\alpha \| n'$ for a fixed $\alpha \ge 2$, then we use \eqref{ratio1}, the fact that $n' \le z_k$ and the maximality of $\lambda(n')$ in order to write
$$
1 \le \frac{\lambda(n')}{\lambda(n'/p)} \le \left(1+\frac{2}{k\alpha}\right)\left(1-\frac{\log p}{\log n'}\right) \le
 \left(1+\frac{2}{k\alpha}\right)\left(1-\frac{\log p}{\log z_k}\right).
$$
We find a contradiction if $p$ is large enough to force the last expression to be less than $1$. In particular, we get an upper bound for the rank $j$ of such a prime $p_j$. Since this upper bound decreases when $\alpha$ increases, we obtain an upper bound for the rank $j$ of any prime $p_j$ for which $p_j^\alpha \mid n'$. Thus, by using \eqref{ratio3}, we obtain Table $5$.
\begin{center}
\begin{tabular}{|c||c|c|c|}\hline
$\alpha\diagdown k$ & 74 & 75 & 76 \\ \hline\hline
$ 4 $ & 11 & 11 & 10 \\ \hline
$ 5 $ & 7 & 6 & 6 \\ \hline
$ 6 $ & 4 & 4 & 4 \\ \hline
\end{tabular}
\vskip 5pt
{\sc Table 5}
\end{center}
Using this, we let $j_1$ take the values in Table $4$, and we let
$$
j_2 \in \{1,\dotso,\min(j_1,u(z_k/(n_{j_1}n_k)))\},
$$
$$
j_3 \in \{1,\dotso,\min(j_2,u(z_k/(n_{j_2}n_{j_1}n_k)),11\ \mbox{or}\ 10)\}
$$
and
$$
j_4 \in \{1,\dotso,\min(j_3,u(z_k/(n_{j_3}n_{j_2}n_{j_1}n_k)),7\ \mbox{or}\ 6)\}.
$$
Again, we treat the cases with $j_4=0$ independently. The computations lead to the result that we must have $k=74$ and $j_1 \in \{16,17,18\}$. We rule out these cases by defining  $f_4(j_5,j_4,j_3,j_2,j_1,k,z)$ and by using Table $5$ to limit the range of the variables $j_3,\ j_4$ and $j_5$. This completes the verifications.

It remains to prove \eqref{cond:c2}. To do so, we use the fact that $j_{s+1} \le j_s$ and that
$$
n_{j_s} \le \frac{z_k}{n_kn_{j_1}\cdots n_{j_{s-1}}}\quad\mbox{and}\quad -c_2(j_s,\dotso,j_1,k)=\log n_{j_s}+\log\frac{n_{j_s}^s}{n_kn_{j_1}\cdots n_{j_{s-1}}},
$$
from which we deduce that
\begin{eqnarray*}
-c_2(j_s,\dotso,j_1,k) & \le &  \log\frac{z_k}{n_kn_{j_1}\cdots n_{j_{s-1}}}+\log\frac{n_{j_s}^s}{n_kn_{j_1}\cdots n_{j_{s-1}}}\\
& \le & \log\frac{z_k}{n_kn_{j_1}}+\log\frac{n_{j_s}}{n_k}\\
& \le & \log\frac{z_k}{n_k^2} < -159.6
\end{eqnarray*}
by direct computation, which proves \eqref{cond:c2}.

We also observe that
\begin{eqnarray*}
-c_2(j_s,\dotso,j_1,k) & = & \log\frac{n_{j_s}^{s+1}}{n_kn_{j_1}\cdots n_{j_{s-1}}} > \log\frac{1}{n_kn_{j_1}\cdots n_{j_{s-1}}}\\
& > & \log\frac{1}{n_k^s} > -5\log n_{94} > -2342.
\end{eqnarray*}
This completes the proof of Theorem \ref{thm:4}.

\begin{remark}\label{rem:3}
We now provide some key details concerning the computations used in the proof of Theorem \ref{thm:4}. The information provided through the previous proof may differ with other information obtained with another strategy. We used $50$ decimals of precision for all computations. We used the criterion $f_s(\dotso) < 0.999999$ (for $s=1,2,3$ or $4$) for each comparison in the four steps of the computation and we kept a pair $(k,j_1)$ if we found $f_s(\dotso) \ge 0.999999$ somewhere in the process. By considering the function $h$ defined in \eqref{def:h}, we approximated $c_1$ and $c_2$ with $50$ decimals and we called $c_1^\prime$ and $c_2^\prime$ these approximations. Then we solved for $z_1$ in $h(z_1)=0$. From
$$
0=c_1^\prime\alpha z_1-c_1^\prime(z_1-c_2^\prime)+(z_1-c_2^\prime)^{1-\alpha} > c_1^\prime\alpha z_1-c_1^\prime(z_1-c_2^\prime),
$$
$\alpha \ge 1/94$ and $-c_2^\prime < -159$, we deduced that $z_1-c_2^\prime > 1.61$. The same is true also for the solution $z$ of
$$
0 = c_1\alpha z-c_1(z-c_2)+(z-c_2)^{1-\alpha}.
$$
It is easy to see that we always have $c_1 < 6!/\log 2 < 1039$. With this information at hand and using the mean value theorem, one finds that
$$
\left|\frac{c_1^\prime(z_1-c_2^\prime)^\alpha-1}{z_1}-\frac{c_1(z_1-c_2)^\alpha-1}{z_1}\right| < 10^{-43},
$$
thus concluding that the two functions are of about the same size for all values of $z$ or $z_1$ such that $z_1-c_2^\prime > 1.6$ or $z-c_2 > 1.6$. From the fact that $94\log 94 < 428$ and that an error of about $z_1\cdot 10^{-50}$ on $z_1$ cost less than $10^{-45}$ in the evaluation of $\frac{c_1^\prime(z_1-c_2^\prime)^\alpha-1}{z_1}$, we end up with an error of at most $10^{-40}$. This is small enough for the criterion we used.
\end{remark}

\section{Proof of Theorem \ref{thm:5}}

First, we verify that the integer $n_{*}$ defined in the statement of the theorem satisfies $\lambda(n_{*})>1$ and is of size $\exp(10640.8428\dots)$. Then, we claim that $n_{*}$ is the largest integer $n$ with $\omega(n) \ge 44$ and $\lambda(n) \ge 1$. To do so, we proceed by contradiction and assume that there exists an integer $n'$ such that $n' > n_{*}$ with $\lambda(n') \ge 1$ and $\omega(n') \ge 44$. The argument is done in several steps.

\subsection{Preliminary steps}

The first step consists in showing that we must have $\omega(n')=44$. For this, we use \eqref{ineq5}, \eqref{def:upsilon} and \eqref{eq:101} to deduce that if we define $z_k$ by $\upsilon(n_k,z_k) = 1$ then we must have $n' \le z_k$. We verify that $z_k \le \exp(4569.68) < n_{*}$ for each $k \in \{45,\dots,73\}$ and then we conclude using Theorem \ref{thm:4}.

We then want to show that $\gamma(n') = n_{44}$. This is done in two steps. We first assume that $n'$ is made of a choice of a set $S$ of $44$ distinct primes in $\{p_1,\dots,p_{45}\}$ and that this choice is not $S^\prime:=\{p_1,\dots,p_{44}\}$. There are $44$ possibilities and if we write $\displaystyle n_S:=\prod_{p \in S}p$, then using again the same argument as previously, we define $z_S$ by $\upsilon(n_S,z_S)=1$ and verify that $z_S \le \exp(9927.67) < n_{*}$ for each $S$.

Now, we assume that $n'$ has a general set of prime factors which has not been previously considered (and is not $S^\prime$), implying that there exists an integer $n'' < n'$ such that $\tau(n'')=\tau(n')$ and such that the set of prime divisors of $n''$ is $S$ for some $S \neq S^\prime$. We then have
$$
\lambda(n') < \lambda(n'') < 1
$$
if $n'' > z_S$. This proves that the set of prime factors of $n'$ must be $S^\prime$.

We solve for $z$ in the equation $\upsilon(n_{44},z)=1$ to find that $n' < \exp(10758.21)$. We have thus proved that
$$
10640.8 < \log n' < 10758.8.
$$
Consider the intervals $I_j:=[10639.8+j,10640.8+j]$ for each $j=1,\dots,118$. From now on, we want to show that $\log n'$ cannot be in any of these $I_j$.

\subsection{A first argumentation}

Recall the notation in \eqref{x} and \eqref{def:mu}, that is $x_i$ ($i=1,\dots,44$), $\mu$, $\mu_1$, $\mu_2$, $\varpi$ and $\varpi'$. The first argument that we use to eliminate some intervals $I_j$ relies on the inequality \eqref{je2} and on the proof of Corollary \ref{cor:1}. For a value of $m \in \{1,\dots,43\}$, we have
\begin{eqnarray*}
\tau(n') & \le & \log^{44} n' \beta(n_{44})\mu_1^m\mu_2^{44-m}\\
& = & {\scriptstyle \beta(n_{44})\left(\frac{\log n'}{44}\right)^{44}\left(1+\frac{\log n_{44}}{\log n'}-\frac{\varpi^\prime}{2m}\right)^{m}\left(1+\frac{\log n_{44}}{\log n'}+\frac{\varpi^\prime}{2(44-m)}\right)^{44-m}}
\end{eqnarray*}
so that if we write

\begin{eqnarray}\nonumber
{\scriptstyle\upsilon_{m}(z,w)} & {\scriptstyle :=} & {\scriptstyle\beta(n_{44})^{\frac{1}{44}}\log 44\left(1+\frac{\log n_{44}}{z}-\frac{w}{2m}\right)^{\frac{m}{44}}\left(1+\frac{\log n_{44}}{z}+\frac{w}{2(44-m)}\right)^{1-\frac{m}{44}}}\\ \label{def:upsilon1}
&& {\scriptstyle-\frac{44\log 44}{z}}
\end{eqnarray}
then we have
$$
\lambda(n') \le \max_{m \in \{1,\dots,43\}} \upsilon_{m}(\log n',\varpi^\prime).
$$
Thus, we define $z_{m,\varpi}$ by $\upsilon_{m}(z_{m,\varpi},\varpi)=1$. We have seen that $z_{m,0}=10758.2\dots$ From {\bf (i)} and {\bf (iii)} of Lemma \ref{lem:02}, we know that $z_{m,w}$ decreases when $w$ increases. We record in Table $6$ a value of $w:=w(j)$ such that
$$
\max_{m \in \{1,\dots,43\}} z_{m,w(j)} < 10639.8+j
$$
for the first $39$ values of $j$.

\begin{center}
\begin{tabular}{|c||c|c|c|c|c|c|c|c|}\hline
$j$ & 1 & 2 & 3 & 4 & 5 & 6 & 7 & 8 \\ \hline
$w$ & 0.2137 & 0.2128 & 0.2119 & 0.2109 & 0.2100 & 0.2091 & 0.2081 & 0.2072 \\ \hline
\end{tabular}
\begin{tabular}{|c||c|c|c|c|c|c|c|c|}\hline
$j$ & 9 & 10 & 11 & 12 & 13 & 14 & 15 & 16 \\ \hline
$w$ & 0.2062 & 0.2053 & 0.2043 & 0.2034 & 0.2024 & 0.2014 & 0.2004 & 0.1995 \\ \hline
\end{tabular}
\begin{tabular}{|c||c|c|c|c|c|c|c|c|}\hline
$j$ & 17 & 18 & 19 & 20 & 21 & 22 & 23 & 24 \\ \hline
$w$ & 0.1985 & 0.1975 & 0.1965 & 0.1955 & 0.1945 & 0.1935 & 0.1925 & 0.1914 \\ \hline
\end{tabular}
\begin{tabular}{|c||c|c|c|c|c|c|c|c|}\hline
$j$ & 25 & 26 & 27 & 28 & 29 & 30 & 31 & 32 \\ \hline
$w$ & 0.1904 & 0.1894 & 0.1884 & 0.1873 & 0.1863 & 0.1852 & 0.1841 & 0.1831 \\ \hline
\end{tabular}
\begin{tabular}{|c||c|c|c|c|c|c|c|}\hline
$j$ & 33 & 34 & 35 & 36 & 37 & 38 & 39 \\ \hline
$w$ & 0.1820 & 0.1809 & 0.1799 & 0.1788 & 0.1777 & 0.1766 & 0.1755 \\ \hline
\end{tabular}
\vskip 5pt
{\sc Table 6}
\end{center}

In the opposite direction, a lower bound for $\varpi^\prime(=\varpi^\prime(n'))$ can be computed for $n'$ assuming that $\log n'$ is in $I_j$. To do so, we split the interval $I_j$ in $210$ subintervals of length $\frac{1}{210}$ that we call $I_{j,j_1}$ where $1 \le j_1 \le 210$. We use Lemma \ref{lem:03} {\bf (i)} with $\varphi=1$ term by term to compute
$$
\min_{z \in I_{j,j_1}}\sum_{i=1}^{44}\min_{\alpha_i \in \Z}\left|\frac{(\alpha_i+1)44\log p_i-\log n_{44}}{z}-1\right|
$$
and take the minimum over the variable $j_1$ to get the lower bound for $\varpi^\prime(n')$ for $\log n'$ in $I_j$. We record the result in Table $7$.

\begin{center}
\begin{tabular}{|c||c|c|c|c|c|c|c|c|}\hline
$j$ & 1 & 2 & 3 & 4 & 5 & 6 & 7 & 8 \\ \hline
$\varpi^\prime$ & 0.1814 & 0.1812 & 0.1810 & 0.1808 & 0.1804 & 0.1802 & 0.1800 & 0.1798 \\ \hline
\end{tabular}
\begin{tabular}{|c||c|c|c|c|c|c|c|c|}\hline
$j$ & 9 & 10 & 11 & 12 & 13 & 14 & 15 & 16 \\ \hline
$\varpi^\prime$ & 0.1797 & 0.1797 & 0.1798 & 0.1800 & 0.1800 & 0.1800 & 0.1798 & 0.1797 \\ \hline
\end{tabular}
\begin{tabular}{|c||c|c|c|c|c|c|c|c|}\hline
$j$ & 17 & 18 & 19 & 20 & 21 & 22 & 23 & 24 \\ \hline
$\varpi^\prime$ & 0.1797 & 0.1798 & 0.1800 & 0.1800 & 0.1800 & 0.1798 & 0.1796 & 0.1792 \\ \hline
\end{tabular}
\begin{tabular}{|c||c|c|c|c|c|c|c|c|}\hline
$j$ & 25 & 26 & 27 & 28 & 29 & 30 & 31 & 32 \\ \hline
$\varpi^\prime$ & 0.1788 & 0.1784 & 0.1781 & 0.1779 & 0.1777 & 0.1775 & 0.1773 & 0.1771 \\ \hline
\end{tabular}
\begin{tabular}{|c||c|c|c|c|c|c|c|}\hline
$j$ & 33 & 34 & 35 & 36 & 37 & 38 & 39 \\ \hline
$\varpi^\prime$ & 0.1769 & 0.1765 & 0.1763 & 0.1761 & 0.1755 & 0.1751 & 0.1748 \\ \hline
\end{tabular}
\vskip 5pt
{\sc Table 7}
\end{center}

Also, we verify that for each $j \in \{40,\dots,118\}$ we have $\varpi^\prime(j)>w(j)$, thereby implying that there exist no $n'$ with $\log n'$ in $I_j$.

All of this gives rise to a new concept that will be crucial for the remaining of the proof. This is the difference between the upper and lower bounds for $\varpi^\prime$. Just saying that here the difference when $j=1$, that is $0.2137-0.1814=0.0323$, is too large for us. In fact, it will be convenient to work with a slightly different concept. Consider the function defined on primes $p$ by
\begin{equation}\label{def:eps}
\epsilon_j(p):=\min_{z \in I_j}\left|\frac{(\alpha+1)44\log p-\log n_{44}}{z}-1\right|.
\end{equation}
The value of $\epsilon_j(p)$ is computed by using Lemma \ref{lem:03} {\bf (i)}. For each $j \in \{1,\dots,39\}$ we sum the $\epsilon_{j}(p_i)$ for $i \in \{ 1,\dots,44\}$ and subtract the answer from the upper bound $w(j)$. We call these value $\delta^\prime(=\delta^\prime(j))$ and  record them in Table $8$.

\begin{center}
\begin{tabular}{|c||c|c|c|c|c|c|c|c|}\hline
$j$ & 1 & 2 & 3 & 4 & 5 & 6 & 7 \\ \hline
$\delta^\prime$ & 0.03422 & 0.03353 & 0.03283 & 0.03203 & 0.03142 & 0.03083 & 0.03005 \\ \hline
\end{tabular}
\begin{tabular}{|c||c|c|c|c|c|c|c|c|}\hline
$j$ & 8 & 9 & 10 & 11 & 12 & 13 & 14 \\ \hline
$\delta^\prime$ & 0.02936 & 0.02848 & 0.02753 & 0.02638 & 0.02536 & 0.02436 & 0.02340 \\ \hline
\end{tabular}
\begin{tabular}{|c||c|c|c|c|c|c|c|c|}\hline
$j$ & 15 & 16 & 17 & 18 & 19 & 20 & 21 \\ \hline
$\delta^\prime$ & 0.02253 & 0.02178 & 0.02075 & 0.01958 & 0.01846 & 0.01748 & 0.01650 \\ \hline
\end{tabular}
\begin{tabular}{|c||c|c|c|c|c|c|c|c|}\hline
$j$ & 22 & 23 & 24 & 25 & 26 & 27 & 28 \\ \hline
$\delta^\prime$ & 0.01561 & 0.01481 & 0.01398 & 0.01340 & 0.01279 & 0.01216 & 0.01133 \\ \hline
\end{tabular}
\begin{tabular}{|c||c|c|c|c|c|c|c|}\hline
$j$ & 29 & 30 & 31 & 32 & 33 & 34 & 35 \\ \hline
$\delta^\prime$ & 0.01053 & 0.00964 & 0.00874 & 0.00795 & 0.00705 & 0.00618 & 0.00547 \\ \hline
\end{tabular}
\begin{tabular}{|c||c|c|c|c|}\hline
$j$ & 36 & 37 & 38 & 39 \\ \hline
$\delta^\prime$ & 0.00458 & 0.00392 & 0.00331 & 0.00258 \\ \hline
\end{tabular}
\vskip 5pt
{\sc Table 8}
\end{center}
The value $\delta^\prime$ is to be interpreted as an upper bound to the extra error that can produce $n'$.

\subsection{A first verification}

We want to make some direct verifications to prove that $\lambda(n') \ge 1$ is impossible if the exponent vector of $n'$ is of a certain type. Consider the sets
\begin{equation}\label{J}
{\textstyle
J_{\delta}(p,j):=\left\{\left\lceil\frac{(1-\delta)(10639.8+j)+\log n_{44}}{44\log p}\right\rceil-1,\dots,\left\lfloor\frac{(1+\delta)(10640.8+j)+\log n_{44}}{44\log p}\right\rfloor-1\right\}}.
\end{equation}
The set $J_\delta(p,j)$ has the property that if
$$
\left|\frac{(\alpha+1)44\log p-\log n_{44}}{\log n}-1\right| \le \delta\quad\mbox{with}\quad \log n \in I_j
$$
then $\alpha \in J_\delta(p,j)$.

We divide the verifications into two distinct types. Type $1$ concerns the sets
$$
S_j(\delta):=J_{\delta}(p_1,j) \times \cdots \times J_{\delta}(p_{44},j).
$$
We take $\delta=0.011$ for $j \in \{1,\dots,4\}$ and $\delta=0.01$ for $j \in \{5,\dots,14\}$. Also, to speed up the process, we consider the union term-by-term of $S_1(0.011),\dots,S_4(0.011)$ to get a new set $S_1$ say, so that $S_1=J_{0.011}(2,1) \cup \dots \cup J_{0.011}(2,4) \times \dots$ We do the same with $S_5(0.01),\dots,S_{14}(0.01)$ to get $S_2$. These sets have respectively $92160$ and $53760$ elements. For each vector $v=(\alpha_1,\dots,\alpha_{44})$ in each of these two sets, we take one of the $946$ possible choices of two elements in a set of $44$ elements, say $(i_1,i_2)$, and construct the new set
$$
{\scriptscriptstyle\{\alpha_1\}\times\cdots\times\{\alpha_{i_1-1}\}\times J_{\epsilon_j(p_{i_1})+\delta^\prime(j)}(p_{i_1},j)\times\{\alpha_{i_1+1}\}\times\cdots\times\{\alpha_{i_2-1}\}\times J_{\epsilon_j(p_{i_2})+\delta^\prime(j)}(p_{i_2},j)\times\{\alpha_{i_2+1}\}\times\cdots\times\{\alpha_{44}\}}.
$$
We verify that all these exponent vectors $v$ give rise to an integer $n$ such that $\lambda(n) < 1$, $\log n < 10640.8$ or is itself $n_{*}$.

Type $2$ concerns the sets
$$
S^\prime_j(\delta):=J_{\epsilon_j(p_1)+\delta}(p_1,j) \times \cdots \times J_{\epsilon_j(p_{44})+\delta}(p_{44},j).
$$
This time, we take $\delta=0.0055$ for $j \in \{1,\dots,6\}$, $\delta=0.0054$ for $j \in \{7,\dots,9\}$ and $j \in \{10,\dots,13\}$, $\delta=0.005$ and $j \in \{14,\dots,19\}$, $\delta=0.0044$ for $j \in \{20,\dots,23\}$, $\delta=0.004$ for $j \in \{24,25,26\}$, $\delta=0.0035$ for $j \in \{27,28,29\}$ and $\delta=0.003$ for $j \in \{30,\dots,39\}$. Again, to speed up the process, we consider the unions term-by-term the same way, so that we have $S_1^\prime=J_{\epsilon_1(2)+0.0055}(2,1) \cup \dots \cup J_{\epsilon_6(2)+0.0055}(2,6) \times \dots$ and the same for $S_2^\prime, \dots, S_8^\prime$. These sets have respectively $98304$, $73728$, $49152$, $49152$, $32768$, $32768$, $32768$ and $24576$ elements. For each vector $v$, we do the exact same process as for the type $1$.

At the end of these verifications, we know that there are at least three entries in the exponent vector that produce a large error and this occurs in both type $1$ and $2$.

\subsection{Reducing the upper bound for $\delta^\prime$}

Our strategy begins with a lower bound for $\varpi^\prime_1$ and $\varpi^\prime_2$. For each $j$, there are four cases to consider depending on the position of the $x_i$ \eqref{x} compared to $\mu$ \eqref{def:mu}. Indeed, we have seen in the previous section, with the type $1$ verification, that there are at least three $x_i$ that are far from $\mu$ but this does not tell us where they are. So any lower bound for $\varpi^\prime_1$ and $\varpi^\prime_2$ will come in pair $(\varpi^\prime_1,\varpi^\prime_2)$ with the total number $m$ of $x_i$ that are less than or equal to $\mu$ and with a position signature $s$ in $\{0,1,2,3\}$ that tells us that the number of $x_i$ that are less than $\mu$ in these three we assume to have. This number $m$ can be shown to take the values we recorded in Table $9$.
\begin{center}
\begin{tabular}{|c||c|c|c|c|c|}\hline
$j \in$ & \{1,\dots,6\} & \{7\} & \{8,\dots,12\} & \{13\} & \{14\} \\ \hline
$m \in$ & \{11,\dots,33\} & \{12,\dots,33\} & \{12,\dots,32\} & \{13,\dots,32\} & \{13,\dots,31\}\\ \hline
\end{tabular}
\vskip 5pt
{\sc Table 9}
\end{center}
To do so, we use Table $7$ and verify that $z_{m,\varpi^\prime(j)}< 10639.8+j$ (see section $9.2$) for all the values of $m$ not listed in Table $9$.

Also, the definition of $\varpi^\prime_1$ and $\varpi^\prime_2$ in \eqref{w1} and \eqref{w2} includes the exact value of $\varpi^\prime$, something that we cannot know precisely. So we assume an interval containing the value of $\varpi^\prime$, and look for a contradiction. More precisely, we will assume, for $j \in \{1,\dots,14\}$, that $\varpi^\prime$ belongs to $W(j):=[w(j)-0.01,w(j)]$.

To get these lower bounds, we fix $j$, $m$ and a signature $s$. Then, we split the interval $I_j$ in $30$ subintervals $I_{j,r_1}$ ($r_1=1,\dots,30$) of equal length and we split the interval $W(j)$ in $80$ subintervals $W(j,r_2)$ ($r_2=1,\dots,80$) of equal length.

We fix $I_{j,r_1}$ and $W(j,r_2)$ and begin with $\varpi^\prime_1$. At first, we focus on the $s$ points $x_i$ that are less than $\mu$. We will show that in this case the minimum of
\begin{equation}\label{ff}
\left|\frac{(\alpha+1)44\log p-\log n_{44}}{x}-1+\frac{\varpi^\prime}{2m}\right|
\end{equation}
is attained with $\alpha:=\left\lceil\frac{(1-\delta)(10639.8+j)+\log n_{44}}{44\log p}\right\rceil-2$ and, as in Lemma \ref{lem:03} {\bf (ii)}, at the extremity of the intervals $I_{j,r_1}$ and $W(j,r_2)$. In fact, from the definition of $J_\delta$, it is enough to show that $\frac{(\alpha+1)44\log p-\log n_{44}}{x}-1+\frac{\varpi^\prime}{2m} < 0$ and this follows from
{\tiny\begin{eqnarray*}
\frac{(\alpha+1)44\log p-\log n_{44}}{x}-1+\frac{\varpi^\prime}{2m} & = & \frac{\left(\left\lceil\frac{(1-\delta)(10639.8+j)+\log n_{44}}{44\log p}\right\rceil-1\right)44\log p-\log n_{44}}{x}-1+\frac{\varpi^\prime}{2m} \\
& = & \frac{\left(\frac{(1-\delta)(10639.8+j)+\log n_{44}}{44\log p}+\xi-1\right)44\log p-\log n_{44}}{x}-1+\frac{\varpi^\prime}{2m} \\
& = & \frac{(1-\delta)(10639.8+j)}{x}-\frac{(1-\xi)44\log p}{x}-1+\frac{\varpi^\prime}{2m} \\
& \le & -\delta+\frac{\varpi^\prime}{2m} < 0
\end{eqnarray*}}
since $2m\delta > 2 > \varpi^\prime$ from our choices, where $0 \le \xi < 1$ and both $\varpi^\prime$ and $x$ are seen as fixed. We keep the $s$ smallest such values among the $44$ prime numbers.

Then, we compute the minimum value of \eqref{ff}, without any constraint on $\alpha$, using Lemma \ref{lem:03} {\bf (i)}. We keep the $m-s$ smallest ones among the $44$ prime numbers. We sum the $m$ values we have kept so far and we take the minimum among the $30 \cdot 80 = 2400$ possible values of $(r_1,r_2)$ and this is the wanted lower bound for $\varpi^\prime_1$ in $I(j)$ with this value of $m$ and $s$. We do the same for $\varpi^\prime_2$ with $3-s$ values of $x_i$ greater than $\mu$ along with the choice $\alpha:=\left\lfloor\frac{(1+\delta)(10640.8+j)+\log n_{44}}{44\log p}\right\rfloor$ instead and the function
\begin{equation}\label{ff2}
\left|\frac{(\alpha+1)44\log p-\log n_{44}}{x}-1-\frac{\varpi^\prime}{2(44-m)}\right|
\end{equation}
as in the definition of $\varpi^\prime_2$ in \eqref{w2}. The proof is similar. We obtain the value for $\varpi^\prime_2$ in $I_j$ with parameters $3-s$ and $44-m$ instead. We keep the pair $(\varpi^\prime_1,\varpi^\prime_2)(=(\varpi^\prime_1(j,m,s),\varpi^\prime_2(j,m,s)))$. For example, here is the output we get as lower bound with $j=1$ and $m=11$:
$$
(0.010296421544, 0.093154520284), (0.010296421544, 0.089438737225),
$$
$$
(0.011179764497, 0.087104430865), (0.012637967643, 0.085223479629)
$$
for $(\varpi^\prime_1,\varpi^\prime_2)$ when $s=0,1,2,3$ respectively. Note that these values are just stated as an example, they are sensitive to the way the program is done. Now, using the same reasoning as we did to get to \eqref{def:upsilon1}, we are led to consider
\begin{eqnarray}\nonumber
{\scriptstyle \upsilon_{m,m_1,m_2}(z,w,w_1,w_2)} & {\scriptstyle :=} & {\scriptstyle \beta(n_{44})^{\frac{1}{44}}\log 44\left(1+\frac{\log n_{44}}{z}-\frac{w}{2m}-\frac{w_1}{2m_1}\right)^{\frac{m_1}{44}}} \\ \nonumber
&& {\scriptstyle \cdot \left(1+\frac{\log n_{44}}{z}-\frac{w}{2m}+\frac{w_1}{2(m-m_1)}\right)^{\frac{m-m_1}{44}}} \\ \nonumber
&& {\scriptstyle \cdot \left(1+\frac{\log n_{44}}{z}+\frac{w}{2(44-m)}-\frac{w_2}{2m_2}\right)^{\frac{m_2}{44}}} \\ \nonumber
&& {\scriptstyle \cdot \left(1+\frac{\log n_{44}}{z}+\frac{w}{2(44-m)}+\frac{w_2}{2(44-m-m_2)}\right)^{1-\frac{m+m_2}{44}}} \\ \label{def:upsilon2}
&& {\scriptstyle -\frac{44\log 44}{z}}
\end{eqnarray}
for fixed values of $m_1 \in \{1,\dots,m-1\}$ and $m_2 \in \{1,\dots,43-m\}$. We also define $z_{j,m,m_1,m_2,s}$ implicitly by
$$
\upsilon_{j,m,m_1,m_2,s}(z_{j,m,m_1,m_2,s},w(j)-0.01,\varpi^\prime_1(j,m,s),\varpi^\prime_2(j,m,s))=1.
$$
and verify that
$$
\max_{m}\max_{s \in \{0,\dots,3\}}\max_{m_1 \in \{1,\dots,m-1\}}\max_{m_2 \in \{1,\dots,43-m\}}z_{j,m,m_1,m_2,s} < 10639.8+j,
$$
where the maximum is taken over the values of $m$ appearing in Table $9$. We justify that $z_{j,m,m_1,m_2,s}$ is the appropriate choice by using Lemma \ref{lem:02} part {\bf (i), (ii)} and {\bf (iv)} (there is a condition to verify in part {\bf (ii)}). We find that $\upsilon_{m,m_1,m_2}$ would be smaller with larger values of the variables. This is the contradiction we were looking for. So we have in fact that $\varpi^\prime \notin W(j)$ and a new upper bound for $\delta^\prime$ recorded in Table $10$.

\begin{center}
\begin{tabular}{|c||c|c|c|c|c|c|c|}\hline
$j$ & 1 & 2 & 3 & 4 & 5 & 6 & 7 \\ \hline
$\delta^\prime$ & 0.02422 & 0.02353 & 0.02283 & 0.02203 & 0.02142 & 0.02083 & 0.02005 \\ \hline
\end{tabular}
\begin{tabular}{|c||c|c|c|c|c|c|c|}\hline
$j$ & 8 & 9 & 10 & 11 & 12 & 13 & 14 \\ \hline
$\delta^\prime$ & 0.01936 & 0.01848 & 0.01753 & 0.01638 & 0.01536 & 0.01436 & 0.01340\\ \hline
\end{tabular}
\vskip 5pt
{\sc Table 10}
\end{center}

\subsection{The last verification}

Our strategy of verification begins with a preliminary computation. We use the type $2$ computations that we did previously to prove that at least three points $x_i$ defined in \eqref{x} are far from $\mu$ defined in \eqref{def:mu}. We first want to show that, among the 13244 possibilities of triplets of primes, at most a few hundreds can produce these three values of $x_i$.

To do so, we fix $j$ and split the interval $I_j$ into $25$ subintervals $I_{j,r}$ of equal length. We also fix a triplet $(q_1,q_2,q_3)$. Now, the type $2$ computations reveal that the exponent of a prime $p \in \{q_1,q_2,q_3\}$ that divide exactly $n'$ is not in $J_{\delta+\epsilon_j(p)}(p,j)$ where $\delta=\delta(j)$ can be found in section $9.3$. We are thus in the exact situation of Lemma \ref{lem:03} {\bf (ii)}. So that we compute and sum the three minimal errors, we take the minimum over $r=1,\dots,25$ and call this minimum $\zeta(=\zeta(q_1,q_2,q_3))$. If
$$
\zeta-\epsilon_j(q_1)-\epsilon_j(q_2)-\epsilon_j(q_3) > \delta^\prime(j)
$$
then the triplet $(q_1,q_2,q_3)$ is rejected. Otherwise, we keep
$$
(q_1,q_2,q_3,\zeta(q_1,q_2,q_3)-\epsilon_j(q_1)-\epsilon_j(q_2)-\epsilon_j(q_3))
$$
to the last verification in a set $T(j)$, say. The value of $\delta^\prime$ is picked from in Table $10$ if $j \le 14$ and from Table $8$ if $15 \le j \le 39$.

Now, for the very last verification, after all the $T(j)$ have been computed, we use a new idea. We assume that $j$ is fixed. For a prime $p$ in a fixed vector $(q_1,q_2,q_3,\rho) \in T(j)$, we observe that it is enough to check the integers $n$ with the exponent in $J_{\delta^\prime(j)+\epsilon_j(p)}(p,j)$. Then, for the remaining $41$ primes $p$, it is enough to verify with the exponent in the set
$$
J_{\delta^\prime(j)/2-\rho/2+\epsilon_j(p)}(p,j)
$$
for all but one prime $p$ for which it can be in $J_{\delta^\prime(j)-\rho+\epsilon_j(p)}(p,j)$.

With these observations in mind, we design an algorithm. We compute the largest fourth component in any of the vectors in $T(j)$ and call it $t$. Then we consider only the vectors such that the fourth component is in $[t-u/1000,t-(u-1)/1000]$ for a fixed $u \in \{1,\dots,7\}$, which we denote by $T_u(j)$. With $u$ fixed, we store in memory all the vectors in
$$
J_{\delta^\prime(j)/2-(t-u/1000)/2+\epsilon_j(2)}(2,j) \times \dots \times J_{\delta^\prime(j)/2-(t-u/1000)/2+\epsilon_j(193)}(193,j)
$$
to which we add two dimensions: one of which is the value of $\tau(n)^{1/44}$ of the integer $n$ with this exponent vector whereas the other is its logarithm. Then, for all such vectors, only four exponents have to be modified at each verification and thus the last two informations need only a small adjustment to be used to compute the value of $\lambda$ in each case. So, for each vector of $46$ dimensions, for each exponent in $J_{\delta^\prime(j)+\epsilon_j(p)}(p,j)$  of each prime $p$ in each triplet (in a vector) in $T_u(j)$ and for each exponent in $J_{\delta^\prime(j)-\rho+\epsilon_j(p)}(p,j)$ of each other $41$ prime $p$ we compute the corresponding value of $\lambda$. We try each value of $u$ and then all the values of $j$.

After all these verifications, no value of $n'$ have been found. This is the contradiction we were searching for and thus $n_{*}$ is the largest number $n$ such that $\lambda(n)>1$ and $\omega(n) \ge 44$. The proof is complete.

\section{Final remarks}

One can show that
$$
\sum_{n\le  x}\left|\lambda(n)-\frac{\log\log x\log\log\log x}{\log x}\right|^2\ll\frac{x\log\log x (\log\log\log x)^2}{\log^2 x},
$$
from which we may conclude that for almost all $n \le  x$,
$$
\lambda(n)=(1+o(1))\frac{\log\log x\log\log\log x}{\log x}\quad(x\rightarrow\infty).
$$
On the other hand, we can show that there are infinitely many $n$ for which $\lambda(n) > 1$. Indeed, to any set $S$ of primes satisfying
$$
\prod_{p \in S}\frac{\log k}{\log p} > 1\quad\mbox{and}\quad \#S=k,
$$
we can associate a sequence of integers $l_1,\ l_2,\ \dotso$ such that their exponent at each prime factor, and then the associated $\theta_i$ as defined in Corollary \ref{cor:1}, is as close as possible to the optimal value as defined in Lemma \ref{ineqfond}. Precisely, for each $p' \in S$,we can choose $m_j$ to be an integer for which the exponent of $p'$, $\alpha_{p'}$, is the closest integer to $\frac{1}{k}\left(\frac{\log z_j}{\log p'}+\sum_{p \in S}\frac{\log p}{\log p'}\right)-1$ for a fixed large $z_j$. One verifies that
$$
\lambda(l_j) \rightarrow \biggl(\prod_{p \in S}\frac{\log k}{\log p}\biggr)^{1/k}\quad(z_j\rightarrow\infty).
$$

Finally, we can also show that the set of limit points of $\lambda(n)$ is the interval $[0,\beta(6)^{1/6}\log 6]$ $=[0,1.145206\dotso]$ and that there exists a positive constant $\eta$ such that
$$
\#\{n \le x|\ \lambda(n) \ge 1\} = (\eta+o(1))\log^{43}x\quad(x\rightarrow\infty).
$$
Moreover, we have
$$
\sup_{\omega(n)=k}\lambda(n)=1-\frac{\log\log k-1}{\log k}+\frac{(\log\log k)^2-3\log\log k}{\log^2k}+O\left(\frac{1}{\log^2k}\right)\quad(k\rightarrow\infty).
$$

\bibliographystyle{amsplain}

{\sc D\'epartement de math\'ematiques et de statistique, Universit\'e Laval, Pavillon Alexandre-Vachon, 1045 Avenue de la M\'edecine, Qu\'ebec, QC G1V 0A6} \\
{\it E-mail address:} {\tt jmdk@mat.ulaval.ca}
\vskip 10pt
{\sc D\'epartement de math\'ematiques et de statistique, Universit\'e Laval, Pavillon Alexandre-Vachon, 1045 Avenue de la M\'edecine, Qu\'ebec, QC G1V 0A6} \\
{\it E-mail address:} {\tt Patrick.Letendre.1@ulaval.ca}

\end{document}